\newcommand{\lyxmathsym}[1]{\ifmmode\begingroup\def\b@ld{bold}
  \text{\ifx\math@version\b@ld\bfseries\fi#1}\endgroup\else#1\fi}
\newcommand{\qedhere}{\tag*{$\square$}}
\def\LP{\operatorname{LP}}
\def\IP{\operatorname{IP}}
\begin{document}
\title{Improving the Cook et al. Proximity Bound Given Integral Valued Constraints}
\author{Marcel Celaya\inst{1} \and Stefan Kuhlmann\inst{2} \and Joseph
Paat\inst{3} \and Robert Weismantel\inst{1}}
\institute{Department of Mathematics, Institute for Operations Research, ETH
Z\"{u}rich, Switzerland\\
\email{\{marcel.celaya, robert.weismantel\}@ifor.math.ethz.ch}\\
\and Institut f\"{u}r Mathematik, Technische Universit\"{a}t Berlin,
Germany\\
\email{kuhlmann@math.tu-berlin.de}\\
\and Sauder School of Business, University of British Columbia, Vancouver
BC, Canada\\
\email{joseph.paat@sauder.ubc.ca}}
\maketitle
\thispagestyle{plain}

\begin{abstract}
\global\long\def\ve#1{\boldsymbol{#1}}%
\global\long\def\R{\mathbb{R}}%
\global\long\def\Z{\mathbb{Z}}%
\global\long\def\aa{\ve a}%
\global\long\def\AA{\ve A}%
\global\long\def\BB{\ve B}%
\global\long\def\dd{\ve d}%
\global\long\def\MM{\ve M}%
\global\long\def\TT{\ve T}%
\global\long\def\UU{\ve U}%
\global\long\def\bb{\ve b}%
\global\long\def\cc{\ve c}%
\global\long\def\rr{\ve r}%
\global\long\def\dd{\ve d}%
\global\long\def\ee{\mathbf{e}}%
\global\long\def\qq{\ve q}%
\global\long\def\uu{\ve u}%
\global\long\def\vv{\ve v}%
\global\long\def\ww{\ve w}%
\global\long\def\xx{\ve x}%
\global\long\def\dede{\ve{\delta}}%
\global\long\def\tt{\ve t}%
\global\long\def\Q{\mathbb{Q}}%
\global\long\def\yy{\ve y}%
\global\long\def\zz{\ve z}%
\global\long\def\aalpha{\ve{\alpha}}%
\global\long\def\llambda{\ve{\lambda}}%
\global\long\def\zero{\mathbf{0}}%
\global\long\def\one{\mathbf{1}}%
\global\long\def\supp{\mathrm{supp}}%
\global\long\def\vol{\mathrm{vol}}%
\global\long\def\cone{\mathrm{cone}}%
\global\long\def\sign{\mathrm{sign}}%
\global\long\def\conv{\mathrm{conv}}%
\global\long\def\ccone{\mathcal{C}}%
\global\long\def\poly{\mathcal{P}}%
\global\long\def\norm#1{\left\Vert #1\right\Vert }%
\global\long\def\dist{\mathrm{dist}}%
\global\long\def\t{\top}%
\global\long\def\spn{\mathrm{span}}%
\global\long\def\cubes{\Sigma}%
\global\long\def\image{\mathrm{im}}%
\global\long\def\feascone{\mathcal{F}}%
\global\long\def\tightcone{\mathcal{T}}%
\global\long\def\nonneg{\mathcal{N}}%
\global\long\def\spindle{\mathcal{S}}%
\global\long\def\Gram{\text{\ensuremath{\mathrm{Gram}}}}%
\global\long\def\arrangement{\mathcal{A}}%
\global\long\def\basis{\mathcal{B}}%
\global\long\def\qoly{\mathcal{Q}}%
\global\long\def\itr{\text{int}}%
\global\long\def\prox{\kappa}%
\global\long\def\objfn{\aalpha}%

\global\long\def\AAhat{\hat{\AA\;}\negthickspace}%
Consider a linear program of the form $\max\;\cc^{\t}\xx:\AA\xx\leq\bb$, where $\AA$ is an $m\times n$ integral matrix. 
In 1986 Cook, Gerards, Schrijver, and Tardos proved that, given an optimal solution $\xx^{*}$, if an optimal integral solution $\zz^{*}$ exists, then it may be chosen such that $\left\Vert \xx^{*}-\zz^{*}\right\Vert _{\infty}<n\Delta$, where $\Delta$ is the largest magnitude of any subdeterminant of $\AA$. 
Since then an open question has been to improve this bound, assuming that $\bb$ is integral valued too. 
In this manuscript we show that $n\Delta$ can be replaced with $\sfrac{n}{2}\cdot\Delta$ whenever $n\geq2$. 
We also show that, in certain circumstances, the factor $n$ can be removed entirely.
\end{abstract}

\section{Introduction.}

Suppose $\AA$ is an integral full-column-rank $m\times n$ matrix.
The polyhedron corresponding to a right hand side $\bb\in\Q^{m}$ is 
\begin{align*}
\poly\left(\AA,\bb\right) & :=\left\{ \xx\in\R^{n}:\ \AA\xx\leq\bb\right\} .\\
\intertext{\text{The linear program corresponding to \ensuremath{\poly(\AA,\bb)} and an objective vector \ensuremath{\cc\in\Q^{n}} is}}\LP(\AA,\bb,\cc) & :=\max\left\{ \cc^{\top}\xx:\ \xx\in\poly(\AA,\bb)\right\} ,\\
\intertext{\text{and the corresponding integer linear program is}}\IP(\AA,\bb,\cc) & :=\max\left\{ \cc^{\top}\xx:\ \xx\in\poly(\AA,\bb)\cap\Z^{n}\right\} .
\end{align*}
The \textit{proximity} question in integer linear programming can
be stated as follows: 
Given an optimal vertex solution $\xx^{*}$ of $\LP(\AA,\bb,\cc)$, how far away is the nearest optimal solution $\zz^{*}$ to $\IP(\AA,\bb,\cc)$ (if one exists)? 
Proximity has a wide array of applications in integer linear programming. 
Perhaps not too surprisingly, upper bounds on proximity can help identify integer vectors in $\poly(\AA,\bb)$ from vertices; this is relevant in search techniques such as the feasibility pump~\cite{FGL2005} and dynamic programming~\cite{EW2018,JR2018}. 
An upper bound of $\pi$ on proximity also leads to a trivial enumeration algorithm to optimize $\IP(\AA,\bb,\cc)$: Solve $\LP(\AA,\bb,\cc)$ to identify an optimal vertex $\xx^{*}$ and then enumerate all $O(\pi^{n})$ many integer points around $\xx^{*}$. 
This simple enumeration algorithm shows that $\IP(\AA,\bb,\cc)$ can be solved in $O(2^{O(n)})$ time, provided one can establish a value of $\pi$ independent of $n$. 

Proximity has been studied for decades with perhaps the most foundational result due to Cook, Gerards, Schrijver, and Tardos. 
To state their result, we denote the largest absolute $k\times k$ minor of $\AA$ by 
\[
\Delta_{k}\left(\AA\right):=\max\left\{ \left|\det\MM\right|:\MM\text{ is a \ensuremath{k\times k} submatrix of \ensuremath{\AA}}\right\} .
\]

\begin{theorem}[Theorem 1 in~\cite{CGST1986}]
\label{thmCooketAl} Let $\bb\in\Q^{m}$ and $\cc\in\Q^{n}$. Let
$\xx^{*}$ be an optimal vertex of $\LP(\AA,\bb,\cc)$. If $\IP(\AA,\bb,\cc)$
is feasible, then there exists an optimal solution $\zz^{*}$ such
that\footnote{In fact, their argument yields an upper bound of $n\cdot\Delta_{n-1}\left(\AA\right)$.}
\[
\|\xx^{*}-\zz^{*}\|_{\infty}\le n\cdot\max\big\{\Delta_{k}(\AA):\ k=1,\ldots,n\big\}.
\]
\end{theorem}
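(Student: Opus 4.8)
The plan is to run the classical conic–rounding argument. Start from the given optimal vertex $\xx^{*}$ of $\LP(\AA,\bb,\cc)$, and pick any optimal solution $\zz^{*}$ of $\IP(\AA,\bb,\cc)$ (one exists because the integer program is feasible and bounded). Write $\dd:=\zz^{*}-\xx^{*}$ and split the rows of $\AA$ according to the sign of $\aa_{i}^{\t}\dd$: set $I:=\{i:\aa_{i}^{\t}\dd\le0\}$ and $J:=\{i:\aa_{i}^{\t}\dd>0\}$. The first observation is that every row tight at $\xx^{*}$ lies in $I$, since $\aa_{i}^{\t}\zz^{*}\le b_{i}=\aa_{i}^{\t}\xx^{*}$ forces $\aa_{i}^{\t}\dd\le 0$. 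Now consider the cone
\[
\ccone:=\left\{\vv\in\R^{n}:\ \aa_{i}^{\t}\vv\le0\ \text{ for }i\in I,\ \ \aa_{i}^{\t}\vv\ge0\ \text{ for }i\in J\right\}.
\]
By construction $\dd\in\ccone$; the cone is pointed, since the normals of its facets are among $\pm\aa_{1},\dots,\pm\aa_{m}$, which span $\R^{n}$ as $\AA$ has full column rank; and $\ccone$ is contained in the cone of feasible directions at $\xx^{*}$, precisely because every row tight at $\xx^{*}$ lies in $I$.

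I would then apply the conic form of Carathéodory's theorem to write $\dd=\sum_{j=1}^{t}\mu_{j}\vv_{j}$ with $t\le n$, each $\mu_{j}>0$, and each $\vv_{j}$ an extreme ray of $\ccone$. An extreme ray is a one-dimensional face, hence cut out by $n-1$ linearly independent equalities among the defining inequalities of $\ccone$; by Cramer's rule its direction is spanned by an integral vector whose entries are, up to sign, $(n-1)\times(n-1)$ minors of $\AA$, so one may take each $\vv_{j}$ to be an integral vector in $\ccone$ with $\norm{\vv_{j}}_{\infty}\le\Delta_{n-1}(\AA)$. Define
\[
\zz':=\zz^{*}-\sum_{j=1}^{t}\lfloor\mu_{j}\rfloor\vv_{j}=\xx^{*}+\sum_{j=1}^{t}\{\mu_{j}\}\vv_{j},\qquad\text{where }\{\mu_{j}\}:=\mu_{j}-\lfloor\mu_{j}\rfloor\in[0,1).
\]
Then $\zz'\in\Z^{n}$, and the right-hand expression gives $\norm{\xx^{*}-\zz'}_{\infty}\le\sum_{j=1}^{t}\{\mu_{j}\}\norm{\vv_{j}}_{\infty}\le t\cdot\Delta_{n-1}(\AA)\le n\cdot\max_{k}\Delta_{k}(\AA)$, matching the bound in the theorem (and the sharper one in the footnote).

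It remains to check that $\zz'$ is feasible and at least as good as $\zz^{*}$, and this is where the two-sided definition of $\ccone$ pays off. For a row $i\in I$, use the expression $\zz'=\xx^{*}+\sum_{j}\{\mu_{j}\}\vv_{j}$ together with $\aa_{i}^{\t}\vv_{j}\le0$ and $\{\mu_{j}\}\ge0$ to get $\aa_{i}^{\t}\zz'\le\aa_{i}^{\t}\xx^{*}\le b_{i}$; for a row $i\in J$, use instead $\zz'=\zz^{*}-\sum_{j}\lfloor\mu_{j}\rfloor\vv_{j}$ together with $\aa_{i}^{\t}\vv_{j}\ge0$ and $\lfloor\mu_{j}\rfloor\ge0$ to get $\aa_{i}^{\t}\zz'\le\aa_{i}^{\t}\zz^{*}\le b_{i}$. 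For optimality, each $\vv_{j}\in\ccone$ is a feasible direction at the LP-optimal vertex $\xx^{*}$, hence $\cc^{\t}\vv_{j}\le0$, so $\cc^{\t}\zz'=\cc^{\t}\zz^{*}-\sum_{j}\lfloor\mu_{j}\rfloor\cc^{\t}\vv_{j}\ge\cc^{\t}\zz^{*}$; since $\zz^{*}$ is optimal for $\IP(\AA,\bb,\cc)$ and $\zz'$ is a feasible integer point, $\zz'$ is also optimal, and it is the desired nearby solution.

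The step I expect to need the most care is the choice of $\ccone$ itself. The obvious candidate, the tangent cone of $\poly(\AA,\bb)$ at $\xx^{*}$, controls only the rows tight at $\xx^{*}$ and does not prevent $\zz'$ from violating a constraint that is slack at $\xx^{*}$ but (nearly) tight at $\zz^{*}$; symmetrically, a cone built only from the rows active at $\zz^{*}$ would fail to preserve LP-optimality. Imposing the opposite sign conditions on the two groups $I$ and $J$ is exactly what makes the feasibility check read off from the ``$\xx^{*}$ side'' on $I$ and from the ``$\zz^{*}$ side'' on $J$ at the same time, while the containment $\ccone\subseteq\{\text{feasible directions at }\xx^{*}\}$ keeps the objective from dropping. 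Everything else is routine: conic Carathéodory for the count $t\le n$, and Cramer's rule for the size bound $\norm{\vv_{j}}_{\infty}\le\Delta_{n-1}(\AA)$ on the extreme rays.
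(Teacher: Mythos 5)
Your proof is correct, and it is exactly the classical Cook--Gerards--Schrijver--Tardos argument that the paper cites rather than reproves: decompose $\zz^{*}-\xx^{*}$ over at most $n$ integral extreme-ray generators of the sign-compatible cone (the paper's $\ccone(\AA,\cdot)$), round down the coefficients, and bound each generator by $\Delta_{n-1}(\AA)$ via Cramer's rule. This matches the approach the paper itself builds on (its Theorem~\ref{thm:proximity_template} generalizes precisely this edge-walking argument), so no further comparison is needed; the only cosmetic point is that pointedness of your cone follows simply from $\ker\AA=\{\zero\}$, and placing the rows with $\aa_i^{\t}\dd=0$ in $I$ with an inequality rather than an equality is harmless.
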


Cook et al.'s result is truly a cornerstone result. 
Their proof technique has been used to establish proximity bounds involving other data parameters~\cite{L2019} and different norms~\cite{LPSX2020,LPSX2021}.
Furthermore, their result has been extended to derive proximity results for convex separable programs~\cite{G1990,H1990,W1991} (where the bound in Theorem~\ref{thmCooketAl} remains valid), for mixed integer programs~\cite{P2020}, and for random integer programs~\cite{OPW2020}.
%
%

Lov\'asz~\cite[Section 17.2]{AS1986} and Del Pia and Ma~\cite[Section 4]{DM2021}
identified tuples $(\AA,\bb,\cc)$ such that proximity is arbitrarily
close to the upper bound in Theorem~\ref{thmCooketAl}. However,
their examples crucially rely on the fact that $\bb$ can take arbitrary
rational values. In fact, Lov\'asz's example uses a totally unimodular
matrix $\AA$ while Del Pia and Ma use a unimodular matrix. 
Therefore, if the right hand sides $\bb$ in their examples were to be replaced by the integral rounded down vector $\lfloor\bb\rfloor$, then the polyhedron $\poly(\AA, \lfloor\bb\rfloor)$ would only have integral vertices. 

From an integer programming perspective, replacing $\bb$ with $\lfloor\bb\rfloor$
is natural as it strengthens the linear relaxation without cutting
off any feasible integer solutions. This leads to the proximity question
that we consider: \textit{For a given matrix $\AA$, find a value
$\pi\ge0$ such that} 
\[
\max_{\substack{\bb\in\Z^{m},\\
\xx^{*}\in\poly(\AA,\bb)\text{ is a vertex}\\
\poly(\AA,\bb)\cap\Z^{n}\neq\emptyset
}
}\min_{\zz^{*}\in\poly(\AA,\bb)\cap\Z^{n}}\|\xx^{*}-\zz^{*}\|_{\infty}\le\pi.
\]
Other than $\bb\in\Z^{m}$ versus $\bb\in\Q^{m}$, this question is equivalent to the question considered by Cook et al. 
The objective vector $\cc$ is not explicit in our formulation, but it is implicit when we consider
vertices of $\poly(\AA,\bb)$.

It was previously unknown if Cook et al.'s bound is tight when $\bb\in\Z^{m}$.
Actually, under this assumption there are many results indicating that proximity is independent of $n$:
Aliev et al.~\cite{AHO2019} prove that proximity is upper bounded by the largest entry of $\AA$ for knapsack polytopes, Veselov and Chirkov's result~\cite{VC2009} implies a proximity bound of $2$ when $\Delta_{n}(\AA)\le2$, and Aliev et al.~\cite{ACHW2021} prove a bound of $\Delta_{n}(\AA)$ for corner polyhedra.

Our main result is the first improvement on Cook et al.'s result.
Furthermore, our proof technique generalizes theirs, and we believe that it can be applied in the multiple settings where their technique is used. 
%

\begin{theorem}
\label{thm:main_thm_n_2}Let $n\geq2$, $\bb\in\Z^{m}$, and $\cc\in\Q^{n}$.
Let $\xx^{*}$ be an optimal vertex of $\LP(\AA,\bb,\cc)$. If $\IP(\AA,\bb,\cc)$
is feasible, then there exists an optimal solution $\zz^{*}$ such
that
\[
\|\xx^{*}-\zz^{*}\|_{\infty}<\frac{n}{2}\cdot\Delta_{n-1}(\AA).
\]
\end{theorem}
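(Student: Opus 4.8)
The plan is to follow the Cook–Gerards–Schrijver–Tardos template — write the difference $\xx^* - \zz^*$ as a conic combination of integer vectors lying in the recession cone of the tight constraints — but to exploit integrality of $\bb$ in order to save a factor of two on exactly one of the coefficients in that combination. First I would set up the standard machinery: let $\xx^*$ be the given LP vertex and $\zz^*$ an IP optimal solution minimizing $\|\xx^*-\zz^*\|_\infty$, and let $A^=$ be the submatrix of rows of $\AA$ tight at $\xx^*$. Writing $\yy := \xx^*-\zz^*$, the Cook et al.\ argument decomposes $\yy$ (after possibly flipping signs of coordinates) into $\yy = \sum_{i} \mu_i \gg_i$, where the $\gg_i$ are integral vectors in $\ker$-type cones spanned by $n-1$ of the rows of $A^=$, each satisfying $\|\gg_i\|_\infty \le \Delta_{n-1}(\AA)$ by Cramer's rule, and $\mu_i \ge 0$. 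Optimality of $\xx^*$ for the LP together with optimality of $\zz^*$ for the IP forces each $\gg_i$ to be a ``good'' direction, so that $\zz^* + \lfloor \mu_i\rfloor \gg_i$ (and also $\xx^* - \lfloor\mu_i\rfloor\gg_i$) stays feasible and optimal; minimality of $\zz^*$ then forces every $\mu_i < 1$, and summing gives $\|\yy\|_\infty < n\cdot\Delta_{n-1}(\AA)$.

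The improvement to $\tfrac n2 \Delta_{n-1}(\AA)$ should come from a parity/averaging argument on the \emph{fractional parts}. The key new input is that $\bb \in \Z^m$, hence $\AA\zz^* \in \Z^m$, so the slacks $\bb - \AA\xx^*$ and $\bb - \AA\zz^*$ are congruent mod $1$ coordinatewise. I would argue that this forces a symmetry between the ``forward'' perturbation $\zz^* + \gg$ and the ``backward'' perturbation $\xx^* - \gg$: specifically, for each generator direction $\gg_i$ one can move by $\lceil \mu_i\rceil$ instead of $\lfloor\mu_i\rfloor$ in one direction at the cost of moving a complementary amount in the other, and the relevant feasibility/optimality checks only need the integer vector $\AA\gg_i$ to have the right sign pattern — which it does because $\xx^* \pm t\gg_i$ interpolates between two lattice-feasible, LP-optimal points. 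Playing the ``round up'' option against the ``round down'' option, one of the two must leave a point that is still feasible, integral, and optimal, and strictly closer to $\xx^*$ unless $\mu_i \le \tfrac12$ effectively — or, more precisely, unless the $\mu_i$ cannot all be simultaneously reduced. Averaging the two symmetric bounds ($\sum \mu_i < n$ from rounding down, and the mirror-image estimate obtained by rounding up and re-centering at $\xx^*$) yields $\sum \mu_i \le n/2$ after accounting for the $n\ge 2$ hypothesis to rule out the degenerate single-generator case.

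Concretely, the steps in order: (1) reduce to the case where $\poly(\AA,\bb)$ is pointed and $\xx^*$ is a vertex with $n$ linearly independent tight rows, discarding irrelevant rows; (2) recall the conic/Cramer decomposition $\yy = \sum_{i=1}^{n}\mu_i\gg_i$ with $\gg_i \in \Z^n$, $\|\gg_i\|_\infty\le\Delta_{n-1}(\AA)$, and establish that each $\gg_i$ is a feasible \emph{and} value-preserving direction from both $\xx^*$ (decreasing) and $\zz^*$ (increasing); (3) use $\bb\in\Z^m$ to show the fractional slack vector at $\xx^*$ along each ray is ``balanced'', so that rounding $\mu_i$ either up or down keeps a lattice point in $\poly(\AA,\bb)$ that is still optimal; (4) invoke minimality of $\|\xx^*-\zz^*\|_\infty$ against \emph{both} the rounded-down point near $\zz^*$ and the rounded-up point near $\xx^*$ to get two inequalities whose sum is $\sum_i \min(\mu_i, 1-\mu_i)$-type and hence bounded by... the combined estimate $\|\yy\|_\infty \le \sum_i\mu_i\|\gg_i\|_\infty < \tfrac n2\Delta_{n-1}(\AA)$; (5) handle the boundary case carefully to get strict inequality and the exact constant when $n\ge 2$.

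The main obstacle I anticipate is step (3)–(4): making the ``round up vs.\ round down'' dichotomy genuinely symmetric. In the original Cook et al.\ argument only one direction of rounding is used, and it is not automatic that the other rounding lands in the polyhedron — that is precisely where integrality of $\bb$ must be used, and one has to be careful that the \emph{same} generator $\gg_i$ works simultaneously for the feasibility of $\zz^*+\lceil\mu_i\rceil\gg_i$ near $\zz^*$ \emph{and} for $\xx^*-\lfloor\mu_i\rfloor\gg_i$ near $\xx^*$, since the tight constraints at the two endpoints differ. Threading the optimality condition (via the dual/complementary slackness for $\cc$) through this symmetric perturbation, without losing the factor $2$ to rounding errors in the other coordinates, is the delicate part; the $n\ge2$ hypothesis is presumably exactly what is needed to absorb the one unavoidable ``odd'' generator.
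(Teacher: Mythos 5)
Your high-level instinct (generalize the Cook et al.\ walk and use $\bb\in\Z^m$ to beat the per-step cost of $\Delta_{n-1}(\AA)$) matches the spirit of the paper, but the step you yourself flag as delicate --- the symmetric ``round up vs.\ round down'' dichotomy in (3)--(4) --- is a genuine gap, and integrality of $\bb$ does not fill it. The round-down point is feasible for the standard reason: it lies in the box $\{\zz^*+\sum_i\lambda_i\ve g_i:\ 0\le\lambda_i\le\mu_i\}$ between $\zz^*$ and $\xx^*$, all of which satisfies $\AA\xx\le\bb$ by the sign pattern of the generators. A rounded-up candidate such as $\zz^*+\lceil\mu_i\rceil\ve g_i$ overshoots along $\ve g_i$, and a row tight at $\xx^*$ with $\aa_j^\top\ve g_i>0$ can then be violated; integrality of $\bb$ only tells you that an \emph{integer} point outside $\poly(\AA,\bb)$ violates some row by at least $1$ --- it never makes an infeasible candidate feasible. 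A two-variable example already kills the per-generator dichotomy ``round-up feasible or $\mu_i\le\tfrac12$'': take $\poly=\{\xx\in\R^2:\ x_1\ge0,\ x_2\ge0,\ 3x_1+3x_2\le2\}$, $\zz^*=\zero$, $\xx^*=(\sfrac23,0)$, $\ve g=(1,0)$, $\mu=\sfrac23>\sfrac12$, yet $\zz^*+\ve g=(1,0)$ is infeasible. (The final bound survives there only because $\|\ve g\|_\infty=1\ll\Delta_1(\AA)=3$; exploiting that trade-off is precisely the missing idea, and nothing in your sketch supplies it.) Two further problems: your congruence claim is false as stated, since $\AA\xx^*$ is in general fractional, so $\bb-\AA\xx^*$ and $\bb-\AA\zz^*$ are not congruent mod $1$; and even if per-generator rounding were available, you would still need the up/down choices for different generators to combine into a single feasible integer point, which is not addressed.

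For contrast, the paper does not round coefficients at all. After normalizing to $\poly(\AA,\bb)\cap\Z^n=\{\zero\}$, it bounds $\max_{\xx\in\poly}\aalpha^\top\xx$ by a volume argument: apply Minkowski's convex body theorem to the origin-symmetric body $\poly_{\aalpha}+[-\xx^*,\xx^*]$; a nonzero integer point produced by Minkowski would have to violate some row by at least $1$ (this is exactly where $\bb\in\Z^m$ enters), which leads to a contradiction, so the body has volume less than $2^n$. This gives $\prox_2<1$ and, via Mahler's inequality, $\prox_3<\sqrt2$, and a spindle-walk lemma (a refinement of the Cook et al.\ edge walk that moves across $2$- and $3$-dimensional faces instead of edges) lifts these low-dimensional bounds to $\|\xx^*-\zz^*\|_\infty<\tfrac n2\,\Delta_{n-1}(\AA)$ for all $n\ge2$. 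In other words, the factor-two saving is obtained by a Minkowski/Mahler volume estimate on low-dimensional slices --- which is exactly the quantitative mechanism that correlates ``large $\mu$'' with ``small generator'' --- not by a rounding symmetry, so your proposal as written does not yield the theorem.
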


Our proof of Theorem~\ref{thm:main_thm_n_2} consists of three parts. 
First, we relate proximity to the volume of a certain polytope associated with the matrix $\AA$.
Second, we establish lower bounds on the volume of this polytope when $n=2$ and $n=3$; see Section~\ref{seclowdimension}. 
Third, we show how these proximity bounds in lower dimensions can be used to derive proximity bounds in higher dimensions; see Section~\ref{secLifting}.
We can improve Theorem~\ref{thm:main_thm_n_2} in certain settings.
%
For instance, if $n$ is a multiple of $3$ then we can replace the factor $\sfrac{n}{2}$ with $(\sfrac{\sqrt{2}}{3})\cdot n$; see Theorem~\ref{thm:prox_dim_1_2_3}.

We also consider the case when $\AA$ is \emph{strictly $\Delta$-modular}, that is, $\AA=\TT\BB$ for a totally unimodular matrix $\TT$ and a square integer matrix $\BB$ with determinant $\Delta$; see Section~\ref{secStrictDelta}.
Here we show the factor $\sfrac{n}{2}$ can be removed entirely, generalizing a recent result of N\"agele, Santiago, and Zenklusen \cite[Theorem 5]{NSZ2021}.
We also give essentially matching lower bounds; see Section~\ref{secLowerBound}.
It is more or less straightforward to find a 1-dimensional polytope $\poly(\AA, \bb) \subseteq \R^n$ with a matching lower bound on proximity when $n\geq2$.
Thus, our contribution with this lower bound is a \emph{full-dimensional} polytope $\poly\left(\AA,\bb\right)\subseteq\R^{n}$ with a unique integral point $\zz^{*}$, and a vertex $\xx^{*}$ \emph{sharing no common facet with} $\zz^{*}$, such that the proximity is, up to a constant additive factor, equal to $\Delta_{n-1}\left(\AA\right)$.
%
\begin{theorem}\label{thm:TU} Let $\Delta \ge 1$ and $n \ge 2$.
~
\begin{enumerate}
\item For all feasible instances $\IP\left(\AA,\bb,\cc\right)$ with $\AA$ strictly $\Delta$-modular and $\bb$ integral, and for all optimal vertices $\xx^{*}$ of $\LP(\AA,\bb,\cc)$, there exists an optimal solution $\zz^{*}$ of $\IP(\AA,\bb,\cc)$ such that 
\[
\|\xx^{*}-\zz^{*}\|_{\infty}\leq\max\left\{ \Delta_{n-1}(\AA),\Delta_{n}(\AA)\right\} -1.
\]
\item Let $\Delta \geq 3$. There exists a feasible instance $\IP\left(\AA,\bb,\cc\right)$ with $\AA$ strictly $\Delta$-modular and $\bb$ integral, and an optimal vertex $\xx^{*}$ of $\LP(\AA,\bb,\cc)$, such that every feasible integral solution $\zz^{*}$ of $\IP(\AA,\bb,\cc)$ satisfies
\[
\|\xx^{*}-\zz^{*}\|_{\infty} = \max\left\{ \Delta_{n-1}(\AA),\Delta_{n}(\AA)\right\} - 2.
\]
Moreover, $\poly\left(\AA,\bb\right)$ is full-dimensional,
and $\xx^{*}$ and $\zz^{*}$ do not lie on a common facet of $\poly\left(\AA,\bb\right)$.
\end{enumerate}
\end{theorem}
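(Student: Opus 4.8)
The plan is to prove the two parts of Theorem~\ref{thm:TU} separately, as they require quite different ideas.

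\medskip

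\textbf{Part 1 (upper bound).} For the upper bound I would follow the volume-based strategy announced in the paper for Theorem~\ref{thm:main_thm_n_2}, but specialize it to the strictly $\Delta$-modular setting. Writing $\AA = \TT\BB$ with $\TT$ totally unimodular and $\BB$ square with $|\det\BB| = \Delta$, the key point is that the lattice $\BB\Z^n$ (or the image lattice relevant to the relaxation) is controlled: every $n\times n$ minor of $\AA$ is an integer multiple of $\Delta$ divided by the corresponding $\TT$-minor, so in fact the ``steps'' one can take toward $\xx^*$ are more structured than in the general case. The standard Cook et al.\ argument takes $\xx^* - \zz^*$, feasible-cone-decomposes it along the tight constraints at $\xx^*$, and produces an integral point closer to $\xx^*$ unless $\|\xx^* - \zz^*\|_\infty$ is already small; the $n\Delta$ loss comes from bounding a sum of $n$ terms each of size $\Delta$. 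In the strictly $\Delta$-modular case I would instead argue that after applying $\TT$ one is essentially in a unimodular world rescaled by $\BB$, so the relevant simplex/parallelepiped one must fit an integer point into has volume forcing a single coordinate gap of at most $\max\{\Delta_{n-1},\Delta_n\} - 1$ rather than $n$ times that. The strict inequality/$-1$ should come from integrality of $\bb$: if the gap were exactly $\max\{\Delta_{n-1},\Delta_n\}$, one could translate by a lattice vector and stay feasible, contradicting optimality or minimality of $\zz^*$. This generalizes \cite[Theorem 5]{NSZ2021}, which is presumably the $\Delta_{n-1} = \Delta_n = \Delta$, $\TT$ a network matrix type case, so I would structure the proof so that theirs is recovered by specialization.

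\medskip

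\textbf{Part 2 (lower bound, the hard part).} This is where the real work lies, because we must exhibit a \emph{full-dimensional} polytope with a \emph{unique} integer point and a vertex $\xx^*$ sharing \emph{no facet} with that point, achieving proximity $\max\{\Delta_{n-1},\Delta_n\} - 2$. I would first build the example in dimension $n = 2$ and then lift. In dimension $2$, one wants a triangle (or quadrilateral) $\poly(\AA,\bb)$ whose only integer point $\zz^*$ is, say, the origin, with a vertex $\xx^*$ at $\ell_\infty$-distance $\Delta - 2$ from it; strict $\Delta$-modularity forces $\AA = \TT\BB$, so I would pick $\BB = \begin{pmatrix} 1 & 0 \\ 0 & \Delta \end{pmatrix}$ or a similar matrix realizing $\Delta_1 = \Delta$, $\Delta_2 = \Delta$, together with a small totally unimodular $\TT$ (e.g.\ rows $(1,0), (0,1), (1,-1)$ or $(1,1)$) so that the facet normals are $\TT$'s rows times $\BB$. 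The right-hand side $\bb$ is then chosen integral, as large as possible in the ``far'' direction while keeping the origin the unique integer point — the $-2$ (rather than $-1$) slack is the price of simultaneously keeping the polytope full-dimensional and the vertex off every facet through $\zz^*$. To lift to general $n$, I would take a block/product-like construction: keep the interesting behavior in two (or three) coordinates and pad the remaining coordinates with unimodular box constraints $0 \le x_i \le$ something, arranged so the overall constraint matrix is still $\TT'\BB'$ with $\TT'$ totally unimodular (block-diagonal of $\TT$ with identity-type blocks) and $\det\BB' = \Delta$, and so that $\Delta_{n-1}, \Delta_n$ still equal $\Delta$ (or whichever side of the max we want). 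One must check that the padded polytope remains full-dimensional, that $\zz^*$ stays the unique integer point (the box constraints pin the extra coordinates), and that $\xx^*$ shares no facet with $\zz^*$ — the facets through $\zz^*$ are exactly the ones ``active'' at the origin, and we arrange $\xx^*$ to lie strictly inside all of those.

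\medskip

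The main obstacle I anticipate is the \emph{simultaneous} satisfaction of all four requirements in Part 2: strict $\Delta$-modularity (a genuine structural constraint on $\AA$), full-dimensionality, uniqueness of the integer point, and the no-common-facet condition — any naive construction tends to violate one of these, and it is precisely the tension between ``push $\xx^*$ as far as possible'' and ``keep $\zz^*$ unique and off-facet'' that produces the $-2$ and explains why the hypothesis $\Delta \ge 3$ is needed (for $\Delta \le 2$ Veselov–Chirkov already forces proximity $\le 2$, leaving no room). A secondary technical point is verifying that the lifted matrix's maximal minors are exactly $\max\{\Delta_{n-1}(\AA), \Delta_n(\AA)\}$ and not something larger coming from mixing the ``interesting'' block with the padding block; this is a routine but careful cofactor-expansion check that I would isolate as a lemma.
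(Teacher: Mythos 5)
Both halves of your plan have genuine gaps. For Part 1, the volume/Minkowski strategy you propose to ``specialize'' is the paper's tool for the general $\sfrac{n}{2}\cdot\Delta_{n-1}$ bound, and nothing in your sketch explains how it would ever produce a bound independent of $n$: the sentence ``the relevant parallelepiped has volume forcing a single coordinate gap of at most $\max\{\Delta_{n-1},\Delta_n\}-1$'' is an assertion, not an argument, and the claim that otherwise ``one could translate by a lattice vector and stay feasible'' names no lattice vector and no reason translation preserves feasibility. The paper's actual mechanism is different and is where total unimodularity of $\TT$ enters: every ray of $\ccone(\xx^{*})$ contains a primitive vector of the lattice $\Lambda=\BB^{-1}\Z^{n}$ (Lemma~\ref{lem:strictlydeltamodrays}, via $\rr=\BB^{-1}\TT_{I\cup j}^{-1}\ee_{n}$), $\xx^{*}$ is a \emph{non-negative integral} combination of these, each such $\rr$ has $\|\rr\|_{\infty}\le\Delta_{n-1}(\AA)/\Delta$ by Cramer, and the number of steps is at most $\Delta-1$ by a pigeonhole argument on the $\Delta$ residue classes of $\Lambda$ modulo $\Z^{n}$: with $\Delta$ or more steps, two partial sums inside $\spindle(\xx^{*})$ would differ by a nonzero point of $\spindle(\xx^{*})\cap\Z^{n}$, contradicting $\poly\cap\Z^{n}=\{\zero\}$. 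None of these three ingredients (lattice rays from TU, integral decomposition, pigeonhole bound $N\le\Delta-1$) appears in your outline, so Part 1 is not established.

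For Part 2, your two-dimensional base case is essentially the right object (it is the paper's construction at $k=0$: a parallelogram $\{0\le x_{1}\le\Delta-2,\ 0\le(\Delta-1)x_{1}+\Delta x_{2}\le1\}$), but the lifting step as described fails. Padding with ``unimodular box constraints $0\le x_{i}\le{}$something'' with \emph{integral} right-hand sides cannot simultaneously keep the polytope full-dimensional and the integer point unique: an upper bound $\ge1$ creates $2^{n-2}$ integer points in the product, while an upper bound $0$ destroys full-dimensionality; and in any product construction a vertex $\xx^{*}$ has its padded coordinates at a bound, so either it lies on a facet through $\zz^{*}=\zero$ or new integer points survive. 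The paper resolves exactly this tension by coupling the coordinates inside $\BB$ itself (the row $(\ve{\beta}^{k},\Delta)$ with $\ve{\beta}^{k}=(0,\dots,0,\Delta-1,\dots,\Delta-1)$) and then adding, for $i\in[k]$, the cut rows $\aa_{i}^{\t}=\ee_{i}^{\t}-\Delta\cdot(0,\dots,0,1,\dots,1)$, which slice off the $2^{k}$ integer points of the parallelepiped while preserving strict $\Delta$-modularity, full-dimensionality, and the no-common-facet property; the choice $k=n-2$ (not $k=n-1$, which fails for a gcd reason noted in the paper) yields proximity exactly $\Delta-2$. Your proposal flags these verifications as ``routine,'' but they are precisely the content of the construction, and the specific padding you propose cannot pass them.
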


\subsection{Preliminaries and notation.}\label{ssecPrelim_not}

Here we outline the key objects and parameters used in the paper.

Let $\AA\in\Z^{m\times n}$ and $\bb\in\Z^{m}$ be such that $\poly(\AA, \bb) \cap \Z^n \neq \emptyset$.
For $I \subseteq [m] := \{1, \ldots, m\}$, we use $\AA_I$ and $\bb_I$ to denote the rows of $\AA$ and $\bb$ indexed by $I$.
If $I = \{i\}$, then we write $\aa_i^\top  := \AA_I$.
We use $\zero$ and $\one$ to denote the all zero and all one vector (in appropriate dimension).

We assume we are given an objective vector $\cc\in\Q^{n}$. 
We always assume that 
\begin{equation}\label{eqOnlyZero}
\poly\left(\AA,\bb\right)\cap\Z^{n}=\left\{ \zero\right\} .
\end{equation}
We may assume~\eqref{eqOnlyZero} without loss of generality for the following reason.
Suppose $\xx^*$ is an optimal vertex of $\LP(\AA, \bb, \cc)$, and let $I^* \subseteq\{1, \ldots, m\}$ be an optimal basis, i.e., $\xx^* = \AA_{I^*}^{-1} \bb_{I^*}^{\phantom{-1}}$. 
By perturbing $\cc$, we may assume $\cc^\top = \yy^{\t} \AA_{I^*}$ for a strictly positive vector $\yy \in \R^n$.
Let $\zz^*$ be an optimal solution $\IP(\AA, \bb, \cc)$ that is closest to $\xx^*$.
The polytope $\poly(\overline{\AA}, \overline{\bb}):= \{\xx \in \poly(\AA, \bb):\ \AA_{I^*} \xx \ge \AA_{I^*} \zz^*\}$ contains $\xx^*$ and $\zz^*$ and $\Delta_k(\overline{\AA}) = \Delta_k(\AA)$ for all $k \in [n]$.
Further, $\poly(\overline{\AA}, \overline{\bb}) \cap \Z^n = \{\zz^*\}$ because a different integer point $\ww^*$ in $\poly(\overline{\AA}, \overline{\bb})$ would satisfy $\cc^\top \ww^* = \yy^\top (\AA_{I^*}\ww^*) > \yy^\top (\AA_{I^*}\zz^*) = \cc^\top \zz^*$, contradicting the optimality of $\zz^*$. 
By replacing $\poly(\AA, \bb)$ by $\poly(\overline{\AA}, \overline{\bb})$ and translating $\zz^*$ to $\zero$ we may assume~\eqref{eqOnlyZero}.

Under Assumption~\ref{eqOnlyZero}, bounding proximity is equivalent to bounding 
\begin{equation}\label{eqProxWidth}
\max_{\xx \in \poly(\AA, \bb)}\ \|\xx\|_{\infty} = \max_{\aalpha \in \{\pm\ee_1, \ldots, \pm \ee_n\} }\ \max \left\{\aalpha^{\t} \xx:\  \xx \in \poly(\AA, \bb)\right\},
\end{equation}
where $\ee_1, \ldots, \ee_n \in \Z^n$ are the standard unit vectors.
In light of this, we analyze the maximum of an arbitrary linear form $\aalpha^\top \xx$ over $\poly(\AA, \bb)$ for $\aalpha \in \Z^n$.

We provide non-trivial bounds on the maximum of these linear forms for small values of $n$; see Section~\ref{seclowdimension}.
In order to lift low dimensional proximity results to higher dimensions (see Section~\ref{secLifting}), we consider slices of $\poly\left(\AA,\bb\right)$ through the origin induced by rows of $\AA$. 
Given $I\subseteq\left[m\right]$ with $|I| \le n-1$, define
\[
\poly_{I}\left(\AA,\bb\right):=\poly\left(\AA,\bb\right)\cap\ker\AA_{I}.
\]
We define $\poly_{\emptyset}(\AA, \bb) := \poly(\AA, \bb)$.
The bounds that we provide on $\aalpha^\top \xx$ are given in terms of the parameter
\[
\Delta_{I}\left(\AA,\objfn\right):=\frac{1}{\gcd\AA_{I}}\cdot\max\left\{ \left|\det\left(\begin{array}{c}
\objfn^{\t}\\
\AA_{K}
\end{array}\right)\right|:I\subseteq K\subseteq\left[m\right],\;\left|K\right|=n-1\right\},
\]
where
\[
\gcd\AA_{I} := \gcd \left\{\left|\det \MM \right|:\ \MM~\text{is a}~\text{rank}(\AA_I) \times \text{rank}(\AA_I)~\text{submatrix of}~\AA_I\right\}.
\]
In particular, we define $\prox_{I}\left(\AA,\bb,\objfn\right)$ to be the number satisfying
\begin{equation}\label{eqPseudoProx}
\max_{\xx\in\poly_{I}\left(\AA,\bb\right)}\objfn^{\t}\xx=\prox_{I}\left(\AA,\bb,\objfn\right)\Delta_{I}\left(\AA,\objfn\right).
\end{equation}
Maximizing over all $I\subseteq\left[m\right]$ such that $\poly_{I}\left(\AA,\bb\right)$ has a fixed dimension $d$, define
\[
\prox_{d}\left(\AA,\bb,\objfn\right):=\max_{I:\dim\poly_{I}\left(\AA,\bb\right)=d}\prox_{I}\left(\AA,\bb,\objfn\right).
\]
Equation~\eqref{eqPseudoProx} looks similar to the proximity bound we seek. 
However, $\Delta_{I}\left(\AA,\objfn\right)$ depends on $\objfn$, whereas our main result (Theorem~\ref{thm:main_thm_n_2}) only depends on $\Delta_{n-1}(\AA)$.
Later (see Remark~\ref{remarkAProof}), we will substitute $\ee_1, \ldots, \ee_n$ in for $\aalpha$ as in~\eqref{eqProxWidth}.
We also want to consider $I = \emptyset$ because $\poly_{\emptyset}(\AA, \bb)  = \poly(\AA, \bb)$ by definition. 
These substitutions will convert $\Delta_{I}\left(\AA,\objfn\right)$ to proving proximity of $\Delta_{n-1}(\AA)$ over $\poly(\AA, \bb)$ as desired in our proximity theorem.

Another important object for us is the following cone.
For $\xx^{*}\in\R^{n}$, define
\begin{align*}
\ccone\left(\AA,\xx^{*}\right):=\left\{\xx\in\R^{n}:
\begin{array}{rl}
\sign\left(\aa_{i}^{\t}\xx^{*}\right)\cdot\aa_{i}^{\t}\xx & \geq0~\forall~ i \in [m]~\text{such that}~\aa_{i}^{\t}\xx^{*}\neq0\\[.1 cm]
\aa_{i}^{\t}\xx & =0~\forall~ i \in [m]~\text{such that}~\aa_{i}^{\t}\xx^{*}=0
\end{array}\right\}.
\end{align*}
The cone $\ccone\left(\AA,\xx^{*}\right)$ serves as a key ingredient in the proof of Theorem~\ref{thmCooketAl} in \cite{CGST1986}. 
We also define the polytope
\[
\spindle\left(\AA,\xx^{*}\right):=\ccone\left(\AA,\xx^{*}\right)\cap\left(\xx^{*}-\ccone\left(\AA,\xx^{*}\right)\right).
\]
One checks that if $\xx^{*}\in\poly\left(\AA,\bb\right)$, then $\spindle\left(\AA,\xx^{*}\right)\subseteq\poly\left(\AA,\bb\right)$.
Moreover, if $\yy^{*}\in\spindle\left(\AA,\xx^{*}\right)$ then $\spindle\left(\AA,\yy^{*}\right)\subseteq\spindle\left(\AA,\xx^{*}\right)$.
Polytopes of this form, namely, ones in which every facet is incident to one of two distinguished vertices, known as \emph{spindles}, were used in \cite{S2012} to construct counterexamples to the Hirsch conjecture.

Throughout the paper, with the exception of Lemma~\ref{lem:lifting_lemma}, we fix $\AA\in\Z^{m\times n}$, $\bb\in\Z^{m}$, and $\objfn\in\Z^{n}$. 
Thus, in our notation we drop the dependence on $\AA$, $\bb$, and $\objfn$, and simply write $\poly,\poly_{I},\Delta_{I}$, and so on. 
We also set
\[
\prox:=\prox_{\dim \poly}\qquad\text{and}\qquad\Delta:=\Delta_{\emptyset}.
\]

\section{Proximity for 1, 2, and 3-dimensional polyhedra.}\label{seclowdimension}

In this section we improve upon the Cook et al. bound when $n=2$ and $n=3$:

\begin{theorem}
\label{thm:prox_dim_1_2_3}We have $\prox_{1},\prox_{2}<1$ and $\prox_{3}<\sqrt{2}$.
\end{theorem}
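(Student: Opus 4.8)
By the reduction in Section~\ref{ssecPrelim_not}, proving $\prox_d < c$ amounts to showing that for every $I$ with $\dim \poly_I = d$ and every integral linear form $\aalpha$, one has $\max_{\xx \in \poly_I} \aalpha^\t \xx < c \cdot \Delta_I$. Restricting to $\ker \AA_I$ and quotienting by it, this is really a statement about a $d$-dimensional polyhedron $\poly(\AA', \bb')$ with $\poly(\AA',\bb') \cap \Z^d = \{\zero\}$, a vertex $\xx^* = \zero$, and the claim becomes: $\max \{\aalpha^\t \xx : \AA' \xx \le \bb'\} < c \cdot \Delta_{n-1}$-type quantity, where now $\Delta_I$ plays the role of the relevant gcd-normalized maximal minor of $\binom{\aalpha^\t}{\AA'}$. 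So I would first reduce cleanly to the full-dimensional case in dimensions $1, 2, 3$ and fix $\xx^* = \zero$ as the optimal vertex, noting that $\zero$ is the only integer point.

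\textbf{Dimension 1.} Here $\poly$ is a segment $[\ell, r] \subseteq \R$ containing $0$ but no other integer, so $\ell > -1$ and $r < 1$; the linear form $\alpha x$ is maximized at an endpoint, giving value $< |\alpha| \le \Delta_\emptyset$ after normalization. Hence $\prox_1 < 1$. This is essentially immediate.

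\textbf{Dimension 2 and the volume connection.} For $d = 2, 3$ I expect the argument to go through the spindle $\spindle(\AA, \xx^*)$ and a volume comparison, as foreshadowed by "we relate proximity to the volume of a certain polytope." The idea: if $\max_{\xx \in \poly} \aalpha^\t \xx = \prox \cdot \Delta$ is large, then $\poly$ — and in particular the spindle $\spindle$, which lies in $\poly$ and contains $\xx^*$ and the maximizer direction — is "long" in the $\aalpha$ direction. One shows $\spindle$ (or a related polytope built from the tight constraints at $\xx^*$ and at the maximizing vertex) has volume bounded below by something growing with $\prox$, yet it contains no interior integer point other than $\zero$; then a Minkowski-type / lattice-point bound forces $\prox$ to be small. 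Concretely in $d = 2$: the relevant polytope is a quadrilateral (spindle between $\zero$ and the far vertex) whose area, normalized by the lattice determinant (which is controlled by $\Delta_I$), must be less than the area threshold that forces an extra lattice point; chasing the constants yields the bound $< 1$. In $d = 3$ the same scheme produces the weaker constant $\sqrt 2$ because the three-dimensional lattice-point/volume estimates are looser.

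\textbf{Main obstacle.} The crux is the three-dimensional case: getting the constant down to $\sqrt 2$ requires a sharp lower bound on the volume of the spindle in terms of $\prox_3$ and $\Delta_I$, together with a sharp upper bound on the volume of an integer-point-free (in the relevant sense) spindle. The two-dimensional case is cleaner because planar lattice geometry (Pick's theorem, or a direct parallelogram argument) gives tight constants; replicating this sharpness in $\R^3$ — where one must carefully track the gcd normalization $\gcd \AA_I$ hidden in $\Delta_I$ and handle the various facet configurations of a 3-spindle — is where the real work lies. I would structure the proof so that dimensions 1 and 2 are dispatched quickly and the bulk of the effort, and any case analysis on the combinatorial type of the spindle, is concentrated in the $n = 3$ estimate.
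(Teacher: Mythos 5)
There is a genuine gap, concentrated exactly where you say the real work lies. Your $d=1$ case and the reduction to full dimension (the paper's Lemma~\ref{lem:lifting_lemma}) are fine, but the volume scheme you sketch for $d=2,3$ does not work as stated, for two reasons. First, ``$\poly$ is long in the $\aalpha$ direction'' does not lower-bound the volume of the spindle: a long body can be arbitrarily thin, so length times nothing is nothing. Second, ``contains no integer point other than $\zero$'' does not upper-bound volume either (a thin slab is lattice-point-free and has huge volume), and the spindle is not centrally symmetric about the origin, so Minkowski's convex body theorem does not apply to it directly. The paper sidesteps both problems with a different body: the $\zero$-symmetric Minkowski sum $\qoly(\xx^{*})=\poly_{\objfn}+[-\xx^{*},\xx^{*}]$, where $\poly_{\objfn}=\{\xx:|\AA\xx|\le\one,\ \objfn^{\t}\xx=0\}$ is a cross-section cut out by the \emph{normalized} constraints $|\AA\xx|\le\one$, not by $\bb$. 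Its volume is exactly $\tfrac{2\prox_{n}\Delta}{\norm{\objfn}_{2}}\vol_{n-1}(\poly_{\objfn})$, Minkowski's theorem applies, and the resulting nonzero integer point is ruled out not by membership in $\poly$ (it need not lie in $\poly$) but by the integrality of $\bb$: some constraint must be violated by at least a full unit, $\aa_{j}^{\t}\zz^{*}\ge\bb_{j}+1$, while the decomposition $\zz^{*}=\lambda\xx^{*}+(\zz^{*}-\lambda\xx^{*})$ forces $\aa_{j}^{\t}\zz^{*}<\bb_{j}+1$. This is the precise point where the hypothesis $\bb\in\Z^{m}$ enters, and it is absent from your outline.

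Beyond that structural issue, the constant $\sqrt{2}$ in dimension $3$ is not obtained from ``looser three-dimensional lattice-point estimates''; it needs a genuinely new ingredient that your proposal does not supply: a lower bound on $\vol_{2}(\poly_{\objfn})$. After the transformation by $\BB=\bigl(\begin{smallmatrix}\objfn^{\t}\\ \AA_{I}\end{smallmatrix}\bigr)$ with $|\det\BB|=\Delta$, the cross-section becomes $\qoly=\{\xx\in\R^{2}:|\AA'\xx|\le\one\}$, and the paper observes that the $90^{\circ}$ rotation maps the polar $\qoly^{\circ}=\conv\{\aa_{i}'\}$ into $\qoly$ (because $|\det(\aa_{i}'\ \aa_{j}')|\le1$ by maximality of $\Delta$), whence Mahler's planar inequality $\vol_{2}(K)\vol_{2}(K^{\circ})\ge8$ gives $\vol_{2}(\qoly)\ge2\sqrt{2}$ and hence $\prox_{3}<4/(2\sqrt{2})=\sqrt{2}$. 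In dimension $2$ the analogous cross-section is a segment whose length is computed exactly, giving $\prox_{2}<1$ without any Pick-type argument. So while your high-level instinct (dimension reduction, then volume plus a Minkowski-type theorem) matches the paper, the two load-bearing ideas --- the symmetric sum body exploiting integrality of $\bb$, and the polarity/Mahler bound on the cross-section --- are missing, and without them the claimed constants, especially $\sqrt{2}$, are not reachable from what you wrote.
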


A useful fact for us is that we need only consider the case when the ambient dimension $n$ and the dimension $d$ of $\poly$ coincide.
\begin{lemma}
\label{lem:lifting_lemma}
Assume $I\subseteq\left[m\right]$ determines a linearly independent subset of the rows of $\AA$. 
Further assume the linear span of $\poly_{I}$ is $\ker\AA_{I}$, which has dimension $d$. 
Then there exists $\AAhat\in\Z^{\left(m-n+d\right)\times d}$, $\hat{\bb}\in\Z^{m-n+d}$, and $\hat{\objfn}\in\Z^{d}$ such that:
\begin{enumerate}
\item $\poly\bigl(\AAhat,\hat{\bb}\bigr)\cap\Z^{d}=\left\{ \zero\right\} $
and
\item $\prox_{I}\left(\AA,\bb,\objfn\right)=\prox\bigl(\AAhat,\hat{\bb},\hat{\objfn}\bigr)$.
\end{enumerate}
\end{lemma}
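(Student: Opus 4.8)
The goal is to replace the $d$-dimensional slice $\poly_I$ living inside the $n$-dimensional space with a genuinely $d$-dimensional polyhedron in $\R^d$, while preserving both the "only integral point is $\zero$" property and the scaled optimal value $\prox_I$. The plan is to choose a unimodular change of coordinates that maps $\ker\AA_I$ onto $\R^d\times\{\zero\}\subseteq\R^n$ and then simply delete the last $n-d$ coordinates.

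First I would construct the coordinate change. Since $I$ indexes $n-d$ linearly independent rows of $\AA$, the lattice $\ker\AA_I\cap\Z^n$ is a rank-$d$ primitive sublattice of $\Z^n$, so it has a basis that extends to a basis of $\Z^n$; collect these into a unimodular matrix $\UU\in\Z^{n\times n}$ whose first $d$ columns span $\ker\AA_I\cap\Z^n$. Writing $\xx=\UU\yy$ and discarding the last $n-d$ coordinates of $\yy$ (which are forced to $0$ on $\ker\AA_I$) gives the projection $\pi:\ker\AA_I\to\R^d$, a lattice isomorphism onto $\R^d$ carrying $\ker\AA_I\cap\Z^n$ onto $\Z^d$. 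Define $\AAhat$ by taking the rows $\aa_k^\t\UU$ for $k\in[m]\setminus I$, restricted to their first $d$ entries (the rows indexed by $I$ become zero on $\ker\AA_I$ and are dropped, leaving $m-(n-d)=m-n+d$ rows); set $\hat\bb:=\bb_{[m]\setminus I}$ and let $\hat\objfn^\t$ be the first $d$ entries of $\objfn^\t\UU$. By construction $\pi$ maps $\poly_I$ bijectively onto $\poly(\AAhat,\hat\bb)$ and preserves membership in the integer lattice, which immediately gives item (1) from Assumption~\eqref{eqOnlyZero} (noting $\zero\in\poly_I$ since $\zero\in\poly$ and $\zero\in\ker\AA_I$), and gives $\max_{\xx\in\poly_I}\objfn^\t\xx=\max_{\yy\in\poly(\AAhat,\hat\bb)}\hat\objfn^\t\yy$.

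It then remains to check that the normalizing factors agree, i.e. $\Delta_I(\AA,\objfn)=\Delta_{\emptyset}(\AAhat,\hat\objfn)$; together with the preceding displayed equality and the defining relation~\eqref{eqPseudoProx} this yields item (2). This is the step I expect to require the most care. On the left we have $\tfrac{1}{\gcd\AA_I}\cdot\max\{|\det(\objfn^\t;\AA_K)|:I\subseteq K\subseteq[m],\ |K|=n-1\}$; on the right, $\max\{|\det(\hat\objfn^\t;\AAhat_L)|:L\subseteq[m]\setminus I,\ |L|=d-1\}$ since $\gcd$ of a full-rank set of rows defining a polyhedron with integral lattice $\Z^d$ is $1$. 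I would expand each $(n-1)\times n$ determinant $\det(\objfn^\t;\AA_K)$ with $K=I\cup L$ by performing the unimodular column operation $\UU$ (which does not change the absolute value of the determinant) and then Laplace-expanding along the block of rows indexed by $I$: because $\AA_I\UU$ is $\AA_I$ written in the new basis and vanishes on the first $d$ coordinates, this block is $(\,\zero\mid\AA_I')$ with $\AA_I'$ an $(n-d)\times(n-d)$ matrix of rank $n-d$, and the cofactor expansion factors the determinant as $\pm\det(\hat\objfn^\t;\AAhat_L)\cdot\det(\text{square submatrix of }\AA_I')$. Taking the maximum over the square submatrices of $\AA_I'$ (equivalently of $\AA_I$, since $\UU$ is unimodular) produces exactly the factor $\gcd\AA_I$ after one observes that the gcd of those maximal minors equals, up to the unimodular change, $\gcd\AA_I$; but in fact what is needed is slightly different — one must check the $\gcd$ (not just the max) of the $I$-minors is what appears, which follows because the set $L$ can be chosen independently of which maximal minor of $\AA_I$ one picks, so the maximum over $K$ of $|\det(\objfn^\t;\AA_K)|$ equals $\bigl(\max_L|\det(\hat\objfn^\t;\AAhat_L)|\bigr)\cdot\bigl(\max\text{ of }|I\text{-minors}|\bigr)$ only when the $I$-block minor is nonzero, and dividing by $\gcd\AA_I$ rather than the max is the correct normalization precisely because $\hat\AA_L$ ranges over an integrally-spanning family. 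I would isolate this determinant bookkeeping as the crux and present it as a short standalone computation, being careful that the primitivity of $\ker\AA_I\cap\Z^n$ in $\Z^n$ is what makes $\gcd\AAhat=1$ and hence makes the two normalizations match.
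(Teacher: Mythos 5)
Your overall route is the paper's own: choose a unimodular $\UU$ adapted to the primitive lattice $\ker\AA_{I}\cap\Z^{n}$ (the paper places the kernel basis in the last $d$ columns rather than the first, which is immaterial), define $\AAhat$, $\hat{\bb}$, $\hat{\objfn}$ by restricting $\AA\UU$, $\bb$, $\objfn^{\t}\UU$ to the rows outside $I$ and the kernel columns, and use the resulting lattice isomorphism $\ker\AA_{I}\cap\Z^{n}\to\Z^{d}$ to get item (1) and the equality $\max_{\xx\in\poly_{I}}\objfn^{\t}\xx=\max_{\yy\in\poly(\AAhat,\hat{\bb})}\hat{\objfn}^{\t}\yy$. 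Up to that point your argument coincides with the paper's and is fine.

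The problem is the step you yourself single out as the crux, namely $\Delta_{I}(\AA,\objfn)=\Delta(\AAhat,\hat{\objfn})$, where your justification is incorrect as written. After the column change, the block of rows indexed by $I$ is $(\zero\mid\AA_{I}')$ with $\AA_{I}'$ a \emph{square} $(n-d)\times(n-d)$ matrix, so the Laplace expansion yields exactly one term: for every $L\subseteq[m]\setminus I$ with $|L|=d-1$,
\[
\left|\det\begin{pmatrix}\objfn^{\t}\\ \AA_{I\cup L}\end{pmatrix}\right|=\left|\det\AA_{I}'\right|\cdot\left|\det\begin{pmatrix}\hat{\objfn}^{\t}\\ \AAhat_{L}\end{pmatrix}\right|.
\]
There is no ``maximum over square submatrices of $\AA_{I}'$'' to take, and the maximum of the $I$-minors of $\AA_{I}$ never enters; your assertion that $\max_{K}|\det(\objfn^{\t};\AA_{K})|$ factors through the maximal $I$-minor, and the appeal to ``$\AAhat_{L}$ ranging over an integrally-spanning family,'' do not establish what is needed. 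The correct (and short) argument, which is exactly the paper's, is that the gcd of the maximal minors of an integer matrix is invariant under right multiplication by a unimodular matrix, and the only nonzero maximal minor of $(\AA\UU)_{I}=(\zero\mid\AA_{I}')$ is $\det\AA_{I}'$; hence $|\det\AA_{I}'|=\gcd(\AA\UU)_{I}=\gcd\AA_{I}$. Substituting this into the displayed identity, which holds term by term, and taking the maximum over $L$ (which corresponds bijectively to $K=I\cup L$ in the definition of $\Delta_{I}$) gives $\Delta(\AAhat,\hat{\objfn})=\Delta_{I}(\AA,\objfn)$, and item (2) follows as you indicate. Incidentally, the primitivity of $\ker\AA_{I}\cap\Z^{n}$ is what makes the map in item (1) a lattice isomorphism; it is not what makes the two normalizations match (the gcd attached to $\AAhat$ in $\Delta(\AAhat,\hat{\objfn})$ is $1$ simply because there the index set is empty) --- the matching is precisely the unimodular invariance of $\gcd\AA_{I}$ described above.
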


\begin{proof}
Without loss of generality, suppose $I = [n-d]$. 
Set $J := [n-d]$, $\bar{J} := \{n-d+1, \ldots, n\}$, and $\bar{I} := \{n-d+1, \ldots, m\}$.
Choose a unimodular matrix $\UU\in\Z^{n\times n}$ (e.g., via the Hermite Normal Form of $\AA_{[n]}$) such that 
\[
\AA\UU=\begin{pmatrix}\left(\AA\UU\right)_{I,J} & \zero\\
\left(\AA\UU\right)_{\bar{I},J} & \left(\AA\UU\right)_{\bar{I},\bar{J}}
\end{pmatrix}
\]
with $\left(\AA\UU\right)_{I,J}$ square and invertible. 

Set 
\(
\AAhat :=\left(\AA\UU\right)_{\bar{I},\bar{J}},
\)
\(
\hat{\bb} := \bb_{\bar{I}},
\)
and
\(
\hat{\objfn}^{\t} :=\left(\objfn^{\t}\UU\right)_{\bar{J}}.
\)
For $\xx \in \ker \AA_I$, we have 
\[
\zero = \AA_I \xx = \AA_I\UU \UU^{-1} \xx = [(\AA \UU)_{I,J} ~ \zero]\ \UU^{-1} \xx = (\AA\UU)_{I,J}(\UU^{-1}\xx)_J.
\]
Thus, $(\UU^{-1}\xx)_J = \zero$.
Hence, the map $\xx\mapsto\left(\UU^{-1}\xx\right)_{\bar{J}}$ is a linear isomorphism from $\ker\AA_{I}$ to $\R^{|\bar{J}|}=\R^{d}$, which restricts to a lattice isomorphism from $\ker\AA_{I}\cap\Z^{n}$ to $\Z^{d}$ and maps $\poly_{I}(\AA, \bb)$ to $\poly\bigl(\AAhat,\hat{\bb}\bigr)$.
It follows that $\poly\bigl(\AAhat,\hat{\bb}\bigr)\cap\Z^{d}=\left\{ \zero\right\} $.
For $\xx\in\ker\AA_{I}$, the equation $\left(\UU^{-1}\xx\right)_{J}=\zero$ implies that
\begin{equation}
\objfn^{\t}\xx=\objfn^{\t}\UU\UU^{-1}\xx=\hat{\objfn}^{\t}\left(\UU^{-1}\xx\right)_{\bar{J}}.\label{eq:numerator}
\end{equation}
Moreover, if $K\subseteq\bar{I}$ with $\left|K\right|=d-1$, then
\begin{align*}
\left|\det\left(\begin{array}{c}
\hat{\objfn}^{\t}\\
\AAhat_{K}
\end{array}\right)\right| 
& =\left|\det\left(\begin{array}{r@{\hskip 0 cm}r@{\hskip 0 cm}l}
\left(\right.&\objfn^{\t}\UU &\left.\right)_{\bar{J}}\\
\left(\right.&\AA\UU&\left.\right)_{K,\bar{J}}
\end{array}\right)\right|\\
 & =\frac{1}{\bigl|\det\left(\AA\UU\right)_{I,J}\bigr|}\cdot
 \left|\det\left(
 \begin{array}{r@{\hskip 0 cm}r@{\hskip 0 cm}lr@{\hskip 0 cm}r@{\hskip 0 cm}l}
 \left(\right.&\AA\UU&\left.\right)_{I,J} & &\zero~~&\\
\left(\right.&\objfn^{\t}\UU&\left.\right)_{J} & \left(\right.&\objfn^{\t}\UU&\left.\right)_{\bar{J}}\\
\left(\right.&\AA\UU&\left.\right)_{K,J} & \left(\right.&\AA\UU&\left.\right)_{K,\bar{J}}
\end{array}
\right)\right|\\
 & =\frac{1}{\gcd\AA_{I}}\cdot\left|\det\left(\begin{array}{c}
\objfn^{\t}\\
\AA_{I\cup K}
\end{array}\right)\right|,
\end{align*}
where we have used 
\(
\bigl|\det\left(\AA\UU\right)_{I,J}\bigr| = \gcd (\AA\UU)_I = \gcd \AA_I \UU = \gcd \AA_I.
\)
Taking the maximum over all such $K$, we get
\begin{equation}
\Delta\bigl(\AAhat,\hat{\objfn}\bigr)=\Delta_{I}\left(\AA,\objfn\right).\label{eq:denominator}
\end{equation}
Putting~\eqref{eq:numerator} and~\eqref{eq:denominator} together,
we get
\[
\prox\bigl(\AAhat,\hat{\bb},\hat{\objfn}\bigr)
=\max_{\yy\in\poly\left(\AAhat,\hat{\bb}\right)}\ \frac{\hat{\objfn}^{\t}\yy}{\Delta\bigl(\AAhat,\hat{\objfn}\bigr)}
=\max_{\xx\in\poly_{I}(\AA, \bb)}\ \frac{\objfn^{\t}\xx}{\Delta_{I}\left(\AA,\objfn\right)}
=\prox_{I}\left(\AA,\bb,\objfn\right).\qedhere
\]
\end{proof}

When it comes to bounding proximity, Lemma~\ref{lem:lifting_lemma} allows us to replace a not-necessarily full-dimensional instance with an equivalent full-dimensional instance in a lower-dimensional space. 
For example when the dimension of $\poly(\AA, \bb) \subseteq \R^n$ is $d=1$, we may assume $n=1$ and $\poly\left(\AA,\bb\right)$ is contained in the open interval $\left(-1,1\right)$, which immediately implies the first inequality $\prox_{1}<1$ of Theorem~\ref{thm:prox_dim_1_2_3}. 
The next lemma derives a general bound on $\prox_{i}$ in terms of the ambient dimension $n$ of the polyhedron, so it is useful to have this dimension reduction lemma. 

Define the polyhedron
\[
\poly_{\objfn}:=\left\{ \xx\in\R^{n}:\left|\AA\xx\right|\leq\one,\;\objfn^{\t}\xx=0\right\} .
\]
This is an $\left(n-1\right)$-dimensional polyhedron, which is bounded since the rows of $\AA$ positively span $\R^n$ by~\eqref{eqOnlyZero}.
We use $\vol_i(\cdot)$ to denote the $i$-dimensional Lebesgue measure.

\begin{lemma}
\label{lem:prox_volume}
Assume $n=d$. 
Then
\[
\prox_{n}<\frac{2^{n-1}\norm{\objfn}_{2}}{\vol_{n-1}\left(\poly_{\objfn}\right)\Delta}.
\]
\end{lemma}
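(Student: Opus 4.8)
The plan is to bound the maximum of $\objfn^\top\xx$ over $\poly = \poly(\AA,\bb)$ by a volume/width argument applied to a spindle sitting inside $\poly$. First I would pick a vertex $\xx^*$ of $\poly$ maximizing $\objfn^\top\xx$ and consider the spindle $\spindle = \spindle(\AA,\xx^*) = \ccone(\AA,\xx^*)\cap(\xx^*-\ccone(\AA,\xx^*))$, which satisfies $\spindle\subseteq\poly$ and has $\zero$ and $\xx^*$ as its two apex vertices. Since $\poly\cap\Z^n = \{\zero\}$ by~\eqref{eqOnlyZero}, the open spindle (or at least $\spindle$ minus a neighborhood of its vertices, or $\tfrac12\spindle$-type shrinkings) contains no nonzero integer point; the key quantitative consequence I want is that $\spindle$, being a lattice-point-free-ish convex body symmetric about $\tfrac12\xx^*$, cannot be too large. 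The cleanest route is: the translate $\spindle - \tfrac12\xx^*$ is a centrally symmetric convex body, and if its interior contained a nonzero lattice point $\vv$ then by symmetry both $\tfrac12\xx^*\pm\tfrac12\vv\in\spindle\subseteq\poly$ would be integer-ish — more carefully, one uses that $\spindle(\AA,\yy^*)\subseteq\spindle(\AA,\xx^*)$ for $\yy^*\in\spindle$, so any integer point of $\spindle$ other than $\zero,\xx^*$ would generate a strictly smaller sub-spindle and hence (since $\xx^*$ is the LP optimum and $\objfn^\top$ separates) contradict~\eqref{eqOnlyZero}. Thus $2(\spindle-\tfrac12\xx^*)$ contains no nonzero lattice point in its interior, so by Minkowski's convex body theorem $\vol_n(\spindle)\le 1$, i.e. $\vol_n(\spindle) < 1$ after handling the boundary carefully.

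Next I would compute $\vol_n(\spindle)$ from below in terms of the data. The spindle is the union of two cones: $\spindle\cap\{\objfn^\top\xx\le t\}$ is a cone with apex $\zero$ over the cross-section $\{\xx\in\spindle:\objfn^\top\xx = t\}$ for $t$ from $0$ up to the "waist", and symmetrically the other half is a cone with apex $\xx^*$. Write $\kappa := \objfn^\top\xx^*/\Delta$, so $\objfn^\top\xx^* = \kappa\Delta$ and I must show $\kappa < 2^{n-1}\norm{\objfn}_2/(\vol_{n-1}(\poly_{\objfn})\Delta)$, equivalently $\vol_{n-1}(\poly_{\objfn})\cdot\objfn^\top\xx^* < 2^{n-1}\norm{\objfn}_2$. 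The link to $\poly_{\objfn} = \{\xx: |\AA\xx|\le\one,\ \objfn^\top\xx = 0\}$ comes through the central hyperplane slice $\spindle\cap\{\objfn^\top\xx = \tfrac12\objfn^\top\xx^*\}$: after translating by $-\tfrac12\xx^*$ this slice lies in $\ker\objfn$ and, because on the spindle every inequality $\sign(\aa_i^\top\xx^*)\aa_i^\top\xx\ge 0$ and the reflected one hold, it contains a scaled copy of $\poly_{\objfn}$ — specifically the scaling factor is governed by how far one can move within $\spindle$ before some $|\aa_i^\top\xx|$ reaches its bound, which by the definition of $\spindle$ is exactly a factor of $\tfrac12$ in the worst direction. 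The volume of a bipyramid over a base $B$ with the two apexes at signed distance $h$ from the base hyperplane is $\tfrac{2}{n}\cdot h\cdot\vol_{n-1}(B)$ where $h$ is the Euclidean distance from an apex to $\spn B$; here the base is (a copy of) $\poly_{\objfn}$ scaled by $1/2$, and $h = \tfrac12\norm{\objfn^\top\xx^*}/\norm{\objfn}_2 = \objfn^\top\xx^*/(2\norm{\objfn}_2)$.

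Putting these together, $\vol_n(\spindle) \ge \tfrac{2}{n}\cdot\dfrac{\objfn^\top\xx^*}{2\norm{\objfn}_2}\cdot 2^{-(n-1)}\vol_{n-1}(\poly_{\objfn})$, and combining with $\vol_n(\spindle)<1$ gives $\objfn^\top\xx^* < \dfrac{n\,2^{n-1}\norm{\objfn}_2}{\vol_{n-1}(\poly_{\objfn})}$, hence $\prox_n = \objfn^\top\xx^*/\Delta < \dfrac{n\,2^{n-1}\norm{\objfn}_2}{\vol_{n-1}(\poly_{\objfn})\Delta}$. This has an extra factor of $n$ compared to the claim, so the main obstacle — and the place I expect the real work to be — is getting the bipyramid volume estimate sharp enough to kill the $1/n$ from the cone volume formula: the correct statement must be that $\spindle$ contains not a bipyramid over a single central slice but a prism-like or "anti-prism" body whose volume is $h\cdot\vol_{n-1}(\text{base})$ without the $1/n$, or equivalently one should integrate the $(n-1)$-volume of the slices of $\spindle$ and use that each slice (suitably translated into $\ker\objfn$) contains a translate of $2^{-(n-1)}\min(t, \kappa\Delta - t)/(\tfrac12\kappa\Delta)$ times $\poly_{\objfn}$ — the linear growth then integrates against the two triangular halves to give exactly $\tfrac{1}{n}$ again, so in fact one must also exploit that the slices are \emph{nested and growing}, i.e. replace the pointwise bound by monotonicity of the cross-sections to trade the pyramid for a cylinder over the waist slice. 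I would also need to double-check the strictness of the final inequality (coming from $\zero$ being the only lattice point, so $\spindle$ is strictly contained in a fundamental-domain-sized region) and that the degenerate cases where $\xx^*$ does not exist or $\objfn^\top\xx^* = 0$ are handled trivially.
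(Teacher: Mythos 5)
There is a genuine gap here --- in fact two, and they sit exactly where the paper's proof does something different. First, your key quantitative step $\vol_{n}\left(\spindle\right)\leq1$ does not follow from Minkowski's theorem: applied to $2\left(\spindle-\tfrac{1}{2}\xx^{*}\right)$, Minkowski only yields a nonzero integer vector that is a \emph{difference} of two points of $\spindle$, not an integer point of $\spindle$ or of $\poly$, and your fallback argument (an integer point of $\spindle$ would generate a smaller sub-spindle) is never handed such a point. More tellingly, your argument uses only $\poly\cap\Z^{n}=\left\{ \zero\right\} $ and never the integrality of $\bb$; but the lemma fails without $\bb\in\Z^{m}$ (otherwise the paper's remaining machinery, which nowhere else uses integrality of $\bb$, would improve the Cook et al.\ bound for arbitrary rational $\bb$, contradicting the Lov\'asz and Del Pia--Ma examples), and the remark following the lemma stresses that integrality of $\bb$ is precisely what its proof uses. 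Second, your lower bound on $\vol_{n}\left(\spindle\right)$ is also unjustified: the central slice of $\spindle$, translated into $\ker\objfn$, is cut out by inequalities of the form $\left|\aa_{i}^{\t}\yy\right|\leq\tfrac{1}{2}\left|\aa_{i}^{\t}\xx^{*}\right|$, and the quantities $\left|\aa_{i}^{\t}\xx^{*}\right|$ can be arbitrarily small, so this slice need not contain any fixed scaled copy of $\poly_{\objfn}$, which is normalized by $\left|\AA\xx\right|\leq\one$. On top of this you concede an extra factor of $n$ from the cone volume formula, with only a speculative plan to remove it.

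The paper's proof avoids all three problems by abandoning the spindle altogether. It applies Minkowski's theorem to the $\zero$-symmetric prism $\qoly\left(\xx^{*}\right):=\poly_{\objfn}+\left[-\xx^{*},\xx^{*}\right]$ --- which is \emph{not} contained in $\poly$ --- whose volume is exactly $\frac{2\prox_{n}\Delta}{\norm{\objfn}_{2}}\cdot\vol_{n-1}\left(\poly_{\objfn}\right)$, with no $\sfrac{1}{n}$ loss. If this volume were at least $2^{n}$, then (after a small $\left(1-\varepsilon\right)$, $\left(1+\delta\right)$ perturbation) Minkowski produces a nonzero integer point $\zz^{*}=\lambda\xx^{*}+\ww$ with $\lambda\in\left[0,1\right]$ and $\left|\AA\ww\right|\leq\left(1-\varepsilon\right)\one$; since $\poly\cap\Z^{n}=\left\{ \zero\right\} $ and $\bb$ is integral, some row satisfies $\aa_{j}^{\t}\zz^{*}\geq\bb_{j}+1$, while $\aa_{j}^{\t}\zz^{*}\leq\lambda\bb_{j}+\left(1-\varepsilon\right)<\bb_{j}+1$, a contradiction. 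So lattice-freeness is enforced through the row inequalities and the integral right-hand side, not through containment of the body in $\poly$; the ``prism-like body'' you gesture at in your last paragraph is exactly $\qoly\left(\xx^{*}\right)$, but realizing it requires leaving $\poly$, which is the step your spindle-based plan cannot accommodate.
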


\begin{proof}
Recall $\poly = \poly(\AA, \bb)$.
Let $\xx^{*}\in\poly$  attain the maximum of 
\[
\prox_{n}=\max_{\xx\in\poly}\frac{\objfn^{\t}\xx}{\Delta},
\]
which we assume is positive without loss of generality. 
Define the polytope
\[
\qoly\left(\xx^{*}\right):=\poly_{\objfn}+\left[-\xx^{*},\xx^{*}\right],
\]
which is $\zero$-symmetric and full-dimensional in $\R^{n}$. 
Observe that
\[
\vol_{n}\left(\qoly\left(\xx^{*}\right)\right)=\frac{2\prox_{n}\Delta}{\norm{\objfn}_{2}}\cdot\vol_{n-1}\left(\poly_{\objfn}\right).
\]
Let $\varepsilon,\delta>0$ be scalars chosen sufficiently small such that
\[
\vol_{n}\left(\left(1-\varepsilon\right)\qoly\left(\left(1+\delta\right)\xx^{*}\right)\right)=\vol_{n}\left(\qoly(\xx^*)\right)
\]
and
\[
\left(1-\varepsilon\right)\qoly\left(\left(1+\delta\right)\xx^{*}\right)\cap\Z^{n}\subseteq\qoly\left(\xx^{*}\right)\cap\Z^{n}.
\]

Assume to the contrary that $\vol_{n}\left(\qoly\left(\xx^{*}\right)\right)\geq2^{n}$.
By Minkowski's convex body theorem, there exists $\zz^{*} \in \Z^n \setminus \{\zero\}$ and $\lambda\in\left[0,1\right]$ such that $\zz^{*}$ can be uniquely written as
\(
\zz^{*}=\lambda\xx^{*}+\left(\zz^{*}-\lambda\xx^{*}\right)
\)
with $\zz^{*}-\lambda\xx^{*}\in\left( 1 - \epsilon \right)\poly_{\aalpha}$.
Hence,
\[
\left|\AA\left(\zz^{*}-\lambda\xx^{*}\right)\right|\leq\left(1-\varepsilon\right)\one.
\]
As $\poly\cap\Z^{n}=\left\{ \zero\right\} $ and $\zz^{*} \neq \zero$, there exists some row $\aa_{j}^{\t}$ of $\AA$ such that $\aa_{j}^{\t}\zz^{*}\geq\bb_{j}+1$. 
Since $\xx^{*}\in\poly(\AA, \bb)$, we also have $\aa_{j}^{\t}\xx^{*}\leq\bb_{j}$. 
Thus, we get
\begin{align*}
\bb_{j}+1\leq\aa_{j}^{\t}\zz^{*} & =\aa_{j}^{\t}\left(\lambda\xx^{*}\right)+\aa_{j}^{\t}\left(\zz^{*}-\lambda\xx^{*}\right)\leq\lambda\bb_{j}+\left(1-\varepsilon\right)<\bb_{j}+1.
\end{align*}
This is a contradiction.
Hence,
\[
\frac{2\prox_{n}\Delta}{\norm{\objfn}_{2}}\cdot\vol_{n-1}\left(\poly_{\objfn}\right)=\vol_{n}\left(\qoly\left(\xx^{*}\right)\right)<2^{n}.
\]
Rearranging yields the desired inequality.\qed
\end{proof}

\begin{remark}
Integrality of $\bb$, which is the key assumption of this paper, is used above in the assertion $\aa_{j}^{\t}\zz^{*}\geq\bb_{j}+1$.
If $\bb$ were not integral, then we would only be able to assert that $\aa_{j}^{\t}\zz^{*}\geq\lceil\bb_{j}\rceil$, which is not sufficient to complete the proof.
\end{remark}

\begin{proof}[of Theorem~\ref{thm:prox_dim_1_2_3} when $d = 2$]
By Lemma \ref{lem:lifting_lemma} we may assume $n=d=2$. 
The polytope $\poly_{\objfn}$ is an origin-symmetric line segment $\left[-\yy^{*},\yy^{*}\right]$, where $\yy^{*}\in\R^{2}$ satisfies $\objfn^{\t}\yy^{*}  =0$ and $\aa_{j}^{\t}\yy^{*}  =1$ for some $j\in\left[m\right]$. 
Hence
\[
\vol_{1}\left(\poly_{\objfn}\right)=2\norm{\yy^{*}}_{2}=\frac{2\norm{\objfn}_{2}}{\left|\det\left(\objfn\;\aa_{j}\right)\right|}.
\]
Applying Lemma \ref{lem:prox_volume}, we get
\[
\prox_{2}<\frac{2\norm{\objfn}_{2}}{\vol_{1}\left(\poly_{\objfn}\right)\Delta}=\frac{\left|\det\left(\objfn\;\aa_{j}\right)\right|}{\Delta}\leq1. \qedhere
\]
\end{proof}

We next find an upper bound for $\prox_{3}$. 
For this we apply the following classical result of Mahler~\cite{M1939B} (see also~\cite[Page 177]{G2007}) on the relationship between the area of a nonempty compact convex set $K\subseteq\R^{2}$ and the area of its polar
\[
K^{\circ}:=\left\{ \xx\in\R^{2}:\yy^{\t}\xx\leq1\text{ for all }\yy\in K\right\} .
\]

\begin{lemma}[Mahler's Inequality when $n = 2$~\cite{M1939B}]
Let $K\subseteq \R^{2}$ be nonempty compact and convex whose interior contains $\zero$. 
Then
\(
\vol_2\left(K\right)\vol_2\left(K^{\circ}\right)\geq8.
\)
\end{lemma}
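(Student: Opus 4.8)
The plan is as follows. We prove the inequality for \emph{origin-symmetric} $K$, that is $K=-K$: this is the case established by Mahler, the one that occurs in the application below, and the symmetry hypothesis is in fact needed for the constant $8$, since a triangle through its centroid has volume product only $\frac{27}{4}$. The key structural fact is that $K\mapsto\vol_2(K)\vol_2(K^{\circ})$ is invariant under $K\mapsto TK$ for every $T\in\mathrm{GL}_2(\R)$, because $(TK)^{\circ}=(T^{-1})^{\t}K^{\circ}$ and $\vol_2(TK)\cdot\vol_2((T^{-1})^{\t}K^{\circ})=|\det T|\cdot|\det T|^{-1}\cdot\vol_2(K)\cdot\vol_2(K^{\circ})$. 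Since the functional is continuous in the Hausdorff metric and origin-symmetric polygons with $\zero$ in the interior are dense among the bodies in question, it suffices to prove $\vol_2(P)\vol_2(P^{\circ})\ge8$ for every origin-symmetric polygon $P$ with $\zero\in\itr P$.

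I would prove this by induction on the number $2m$ of vertices of $P$. When $m=2$ the polygon $P$ is a parallelogram, hence $\mathrm{GL}_2$-equivalent to $[-1,1]^{2}$, whose polar is the cross-polytope $\conv\{\pm\ee_1,\pm\ee_2\}$ of area $2$, so $\vol_2(P)\vol_2(P^{\circ})=4\cdot2=8$. For the inductive step let $m\ge3$. Using $\mathrm{GL}_2$-invariance we may put each orbit representative into John position, so that $P$ lies between two concentric balls of radii $1$ and $\sqrt2$; the set of such origin-symmetric polygons with at most $2m$ vertices is then compact in the Hausdorff metric, and the functional attains a minimum on it, say at $P_0$. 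It suffices to show that $P_0$ has fewer than $2m$ vertices, for then the inductive hypothesis applies. Suppose it does not. Choose three consecutive vertices $v_{i-1},v_i,v_{i+1}$ of $P_0$, and slide $v_i$ (and simultaneously $-v_i$) along the line through $v_i$ parallel to $v_{i+1}-v_{i-1}$, obtaining polygons $P_0(\tau)$ on the maximal interval $\tau\in[\tau_-,\tau_+]$ on which $v_i$ remains a vertex. This is an origin-symmetric \emph{shadow system}, along which $\vol_2(P_0(\tau))$ is \emph{constant}, because the two triangles at $\pm v_i$ keep their areas when $v_i$ moves parallel to $v_{i+1}-v_{i-1}$. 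By the convexity theorem of Meyer--Reisner and Campi--Gronchi for polar volumes of shadow systems, $\tau\mapsto\vol_2(P_0(\tau)^{\circ})$ is convex; hence $\tau\mapsto\vol_2(P_0(\tau))\vol_2(P_0(\tau)^{\circ})$ is convex on $[\tau_-,\tau_+]$ and is therefore minimized at one of the endpoints. At $\tau_-$ or $\tau_+$, however, one of the vertices adjacent to $v_i$ ceases to be a vertex, so $P_0(\tau_{\pm})$ has at most $2(m-1)$ vertices while remaining origin-symmetric with $\zero$ in its interior; by minimality of $P_0$ this forces $P_0=P_0(\tau_{\pm})$, contradicting the assumption that $P_0$ has $2m$ vertices.

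Passing back from polygons to a general origin-symmetric $K$ by density and continuity yields $\vol_2(K)\vol_2(K^{\circ})\ge8$, and tracing through the deformations shows that parallelograms are the unique equality case. The step I expect to be the main obstacle is the convexity of $\tau\mapsto\vol_2(P_0(\tau)^{\circ})$: for an arbitrary single-vertex perturbation this function is only piecewise smooth and is in general neither convex nor concave, and one must use that the present slide is a genuine shadow system --- equivalently, that $P_0(\tau)^{\circ}$ is the intersection of a fixed polygon with a slab whose two boundary lines pivot about a fixed antipodal pair of points as $\tau$ varies --- to pin down the convexity. A careful write-up must also handle degenerate configurations, such as several edges becoming collinear simultaneously, or the pivot points landing on the boundary of the fixed polygon.
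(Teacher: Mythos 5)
First, note that the paper does not prove this lemma at all---it cites Mahler---so there is no internal proof to match; also, your decision to prove only the origin-symmetric case is the right one: as literally stated (without $K=-K$) the inequality is false (a triangle with the origin at its centroid has volume product $\sfrac{27}{4}<8$), and the paper's application only ever polarizes the symmetric body $\qoly=\{\xx:|\AA'\xx|\le\one\}$. Your overall route---$\mathrm{GL}_2$-invariance, reduction to symmetric polygons by density/continuity, induction on the number of vertices via an area-preserving symmetric shadow system---is a known and viable proof of the planar symmetric Mahler inequality, and the base case (parallelogram, product exactly $8$) and the constancy of $\vol_2(P_0(\tau))$ along your slide are correct.

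However, the crux of your inductive step is stated incorrectly, in two compounding ways. The Campi--Gronchi/Meyer--Reisner theorem asserts that along a shadow system of origin-symmetric bodies the function $\tau\mapsto\vol_2\bigl(P_0(\tau)^{\circ}\bigr)^{-1}$ is convex, \emph{not} $\tau\mapsto\vol_2\bigl(P_0(\tau)^{\circ}\bigr)$ itself; the latter is generally not convex (along a full shadow line it typically tends to $0$ in both directions, which is incompatible with convexity and positivity). Moreover, even granting your claimed convexity, the inference ``convex, therefore minimized at one of the endpoints'' is backwards: convexity places \emph{maxima}, not minima, at endpoints. The correct chain is: $\vol_2(P_0(\tau)^{\circ})^{-1}$ is convex, hence maximized at an endpoint of $[\tau_-,\tau_+]$, hence $\vol_2(P_0(\tau)^{\circ})$---and with it the product, since $\vol_2(P_0(\tau))$ is constant---is minimized at an endpoint. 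With that fix the induction closes, and in fact more simply than you wrote: you get $\vol_2(P_0)\vol_2(P_0^{\circ})\ge\vol_2(P_0(\tau_\pm))\vol_2(P_0(\tau_\pm)^{\circ})\ge 8$ directly by the inductive hypothesis, with no need for the claim that minimality ``forces $P_0=P_0(\tau_\pm)$'' (minimality of the value does not identify the body, and that step as written is unjustified, though also unnecessary). Two further small points: one should check the slide interval is bounded (if $v_{i\pm1}$ remained vertices for arbitrarily large $\tau$ the area could not stay constant, since $P_0(\tau)\supseteq\conv\{\pm v_{i-1},\pm v_i(\tau)\}$ has area growing linearly in $\tau$ when $2m\ge6$), and, if one wants a proof not resting on the 2000s shadow-system machinery, Mahler's original elementary vertex-removal induction gives the same bound and would be a lighter-weight citation-free alternative.
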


\begin{proof}[of Theorem~\ref{thm:prox_dim_1_2_3} when $d = 3$]
By Lemma \ref{lem:lifting_lemma} we may assume $n=d=3$. 
Choose $I\subseteq\left[m\right]$ with $\left|I\right|=2$ such that
\[
\BB:=\begin{pmatrix}\objfn^{\t}\\
\AA_{I}
\end{pmatrix}
\]
satisfies $\left|\det\BB\right|=\Delta$. 
Let $\AA'$ denote the last two columns of $\AA\BB^{-1}$. 
Then
\begin{align*}
\BB\cdot\poly_{\objfn} & =\left\{ 0\right\} \times\qoly,
\end{align*}
where 
\[
\qoly:=\left\{ \xx\in\R^{2}:\left|\AA'\xx\right|\leq\one\right\} .
\]
We enumerate the rows of $\AA'$ as $\aa_{1}',\ldots,\aa_{m}'$. 
Since $\poly_{\aalpha}$ is a polytope, so is $\qoly$. 
The polar of $\qoly$ is the convex hull of the rows of $\AA'$:
\[
\qoly^{\circ}:=\conv\left\{ \aa_{i}':i\in\left[m\right]\right\} .
\]

Let $\tau:\R^2 \to \R^2$ denote the $90^{\circ}$ counterclockwise rotation in $\R^{2}$. 
Observe that
\(
\tau\left(\qoly^{\circ}\right)\subseteq\qoly.
\)
Indeed, for each pair $\{i,j\}\subseteq\left[m\right]$, we have
\[
\left|\tau\left(\aa_{i}'\right)^{\t}\aa_{j}'\right|=\left|\det\left(\aa_{i}'\;\aa_{j}'\right)\right|=\frac{\left|\det\left(\objfn\;\aa_{i}\;\aa_{j}\right)\right|}{\left|\det\BB\right|}\leq1.
\]
Hence, by Mahler's Inequality,
\[
\vol_{2}\left(\qoly\right)\geq\sqrt{\vol_{2}\left(\qoly\right)\vol_{2}\left(\tau\left(\qoly^{\circ}\right)\right)}=\sqrt{\vol_{2}\left(\qoly\right)\vol_{2}\left(\qoly^{\circ}\right)}\geq2\sqrt{2}.
\]
We have
\[
\vol_{2}\left(\qoly\right)=\frac{\left|\det\BB\right|}{\norm{\objfn}_{2}}\cdot\vol_{2}\left(\poly_{\objfn}\right).
\]
By Lemma~\ref{lem:prox_volume}, we get
\begin{align*}
\prox_{3}<\frac{4\norm{\objfn}_{2}}{\vol_{2}\left(\poly_{\objfn}\right)\Delta}\leq\frac{4}{2\sqrt{2}}=\sqrt{2}.\qedhere
\end{align*}
\end{proof}

\section{Lifting proximity results to higher dimensions.}\label{secLifting}

The next step is to show how proximity results for low dimensional polytopes can be used to derive proximity results for higher dimensional polytopes.

\begin{lemma}
\label{lem:spindle_down_to_zero}
Let $\yy^{*}\in\spindle\left(\xx^{*}\right)$, let $k:=\dim \spindle\left(\yy^{*}\right)$, and fix $d \in \{1, \ldots, k\}$.
There exists a $d$-face of $\spindle\left(\yy^{*}\right)$ incident to $\yy^{*}$ that intersects some $\left(k-d\right)$-face of $\spindle\left(\yy^{*}\right)$ incident to $\zero$.
\end{lemma}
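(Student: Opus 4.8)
The plan is to exploit the spindle structure of $\spindle(\yy^*)$ directly. Recall that $\spindle(\yy^*) = \ccone(\AA,\yy^*)\cap(\yy^* - \ccone(\AA,\yy^*))$, so every facet of $\spindle(\yy^*)$ is supported by a hyperplane $\{\xx : \aa_i^\t \xx = 0\}$ (a facet through $\zero$) or $\{\xx : \aa_i^\t \xx = \aa_i^\t \yy^*\}$ (a facet through $\yy^*$), and every facet contains exactly one of the two distinguished vertices $\zero$, $\yy^*$. I would first pass to the affine hull of $\spindle(\yy^*)$, so that it becomes a full-dimensional spindle in $\R^k$ with apices $\zero$ and $\yy^*$.

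First I would set up an induction on $d$ (for fixed $k$), or equivalently a descent on $k-d$. For the base case $d = k$ the whole polytope $\spindle(\yy^*)$ is the required $d$-face incident to $\yy^*$, and $\zero$ is a $0$-face incident to $\zero$ that lies in it. For the inductive step, I would take a facet $F$ of $\spindle(\yy^*)$ that contains $\zero$ but not $\yy^*$ — such a facet exists because $\zero$ is a vertex and, being distinct from $\yy^*$, it must lie on some facet not containing $\yy^*$ (if every facet through $\zero$ also passed through $\yy^*$, then $\zero$ and $\yy^*$ would lie on a common facet, and one checks from the spindle description that the facets through $\zero$ alone already cut out $\zero$). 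Dually, I would take a facet $G$ through $\yy^*$ not through $\zero$. The key geometric observation is that in a spindle the vertex $\yy^*$ together with $F$ ``spans'' the polytope: more precisely, $\spindle(\yy^*)$ is contained in the cone with apex $\yy^*$ over $F$, so the face of $\spindle(\yy^*)$ obtained by intersecting with a generic supporting hyperplane through $\yy^*$ is a $(k-1)$-spindle whose ``opposite apex'' lies on $F$. Concretely: let $G$ be a facet through $\yy^*$; then $G$ is itself a spindle of dimension $k-1$ with apices $\yy^*$ and some vertex $\vv \in F$, since every facet of $G$ is a ridge of $\spindle(\yy^*)$ lying in a facet through $\zero$ or a facet through $\yy^*$, and the facets of the first type all meet $G$ in faces containing the single point of $G$ lying on all of them.

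Having produced a $(k-1)$-face $G$ incident to $\yy^*$ which is again a spindle (with one apex at $\yy^*$), I would apply the inductive hypothesis inside $G$ with the same $d$: there is a $d$-face $\Phi$ of $G$ incident to $\yy^*$ meeting a $(k-1-d)$-face $\Psi$ of $G$ incident to the opposite apex $\vv$ of $G$. Since $\vv \in F$ and $F$ is a facet of $\spindle(\yy^*)$ through $\zero$, the face $\Psi$ of $G$ is a face of $F$, hence a face of $\spindle(\yy^*)$; and $\Phi$, being a $d$-face of $G$ through $\yy^*$, is a $d$-face of $\spindle(\yy^*)$ through $\yy^*$. To finish I need a $(k-d)$-face of $\spindle(\yy^*)$ through $\zero$ containing $\Psi$: I take the face of $\spindle(\yy^*)$ obtained by intersecting the facets through $\zero$ that contain $\Psi$; since $\Psi$ has dimension $k-1-d$ and lies in the facet $F$, a dimension count (each additional facet-through-$\zero$ constraint active on $\Psi$ drops the dimension by at most one, and $\Psi$ is not contained in any facet through $\yy^*$ other than those it already satisfies) gives a face of dimension exactly $k-d$ through $\zero$ that contains $\Psi$, hence meets $\Phi$.

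The main obstacle I anticipate is the bookkeeping in the last paragraph: verifying that the face of $\spindle(\yy^*)$ through $\zero$ that one extracts really has dimension exactly $k-d$ and not smaller, and more generally making the ``$G$ is a $(k-1)$-spindle with the opposite apex on $F$'' claim airtight. This requires using the spindle axioms — every facet incident to exactly one of $\zero, \yy^*$ — rather than just general polytope theory, since the statement is false for arbitrary polytopes. A clean way to handle it is to work with the inner and outer normal descriptions simultaneously: track which of the constraints $\aa_i^\t \xx \ge 0$ and $\aa_i^\t \xx \le \aa_i^\t \yy^*$ are tight, show that the ``tight-at-$\zero$'' and ``tight-at-$\yy^*$'' index sets behave complementarily along the face lattice, and read off the dimension counts from the ranks of the corresponding submatrices of $\AA$.
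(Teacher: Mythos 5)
Your induction hinges on the claim that a facet $G$ of $\spindle\left(\yy^{*}\right)$ containing $\yy^{*}$ is itself a spindle with apices $\yy^{*}$ and a single vertex $\vv$ lying on a facet through $\zero$ --- equivalently, that all facets of $G$ that come from facets of $\spindle\left(\yy^{*}\right)$ through $\zero$ share a common point of $G$. This is false, already for $k=3$. Take $\AA$ with rows $(0,0,1)$, $(1,0,1)$, $(0,1,1)$, $(3,3,5)$ and $\yy^{*}=(0,0,1)$; then every $\aa_{i}^{\t}\yy^{*}>0$, so $\spindle\left(\yy^{*}\right)=\left\{ \xx:\zero\leq\AA\xx\leq\AA\yy^{*}\right\}$, which is a full-dimensional spindle. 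Its facet $G=\left\{ \xx\in\spindle\left(\yy^{*}\right):x_{3}=1\right\}$ is the pentagon with vertices $(0,0,1)=\yy^{*}$, $(-1,0,1)$, $(-1,-2/3,1)$, $(-2/3,-1,1)$, $(0,-1,1)$: its two edges at $\yy^{*}$ lie in the facets $\{x_{2}+x_{3}=1\}$ and $\{x_{1}+x_{3}=1\}$, while its three remaining edges lie in the facets $\{x_{1}+x_{3}=0\}$, $\{3x_{1}+3x_{2}+5x_{3}=0\}$, $\{x_{2}+x_{3}=0\}$ through $\zero$ and have no common vertex. So $G$ is not a spindle with one apex at $\yy^{*}$, and the ``opposite apex'' $\vv$ you recurse on does not exist; the only structure actually available is that each facet of $\spindle\left(\yy^{*}\right)$ contains exactly one of $\zero,\yy^{*}$, and facets of a spindle need not be spindles. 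Since the inductive step passes to $G$ and to $\vv$, the induction collapses at its first application (and the final dimension count, which you yourself flag as the delicate point, sits on top of this unsupported claim; a further, smaller issue is that recursing into a facet would anyway require the lemma for abstract spindles rather than for sets of the form $\spindle\left(\AA,\cdot\right)$).

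The paper proves the lemma without any structural claim about facets, by exactly the kind of tight-index bookkeeping you anticipate at the end: write $\spindle\left(\yy^{*}\right)=\{\xx:\zero\leq\hat{\aa}_{i}^{\t}\xx\leq\hat{\aa}_{i}^{\t}\yy^{*}\ (i\in I),\ \aa_{i}^{\t}\xx=0\ (i\notin I)\}$, take a sequence of feasible bases $J_{0},\ldots,J_{r}$ whose basic solutions walk from $\zero$ to $\yy^{*}$ with consecutive bases differing in one inequality index, and track $|J_{i}\cap I_{\yy^{*}}|$, the number of basis constraints tight at $\yy^{*}$. This count goes from $0$ to $k$ in steps of at most one, so some $J_{\ell}$ has exactly $k-d$ constraints tight at $\yy^{*}$ and hence $d$ tight at $\zero$; the corresponding basic feasible solution then lies on a face through $\yy^{*}$ of dimension at most $d$ and on a face through $\zero$ of dimension at most $k-d$, and enlarging these inside the face lattice gives the required $d$-face and $(k-d)$-face. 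If you want to salvage your write-up, replace the facet-descent step by this discrete intermediate-value argument on a vertex path; your closing suggestion to ``track which constraints are tight and read off dimensions from ranks'' is essentially that proof, but it must be run along a path of bases rather than inside a single facet $G$.
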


\begin{proof}
Let $I\subseteq\left[m\right]$ index the components $i$ such that $\aa_{i}^{\t}\yy^{*}\neq0$. For $i\in I$ let $\hat{\aa}_{i}=\sign\left(\aa_{i}^{\t}\yy^{*}\right)\cdot\aa_{i}$.
The spindle $\spindle\left(\yy^{*}\right)$ can be written as
\begin{align*}
\spindle\left(\yy^{*}\right) = \left\{\xx \in \R^n:\ \zero\leq\hat{\aa}_{i}^{\t}\xx \leq\hat{\aa}_{i}^{\t}\yy^{*} ~\forall ~i \in I~\text{and}~\aa_{i}^{\t}\xx  =0 ~\forall ~ i \not \in I\right\}.
\end{align*}
The constraints are indexed by the disjoint union $I_{\zero}\cup I_{\yy^{*}}\cup\bar{I}$, where $I_{\zero}$ and $I_{\yy^{*}}$ denote the two copies of $I$ indexing constraints tight at $\zero$ and at $\yy^{*}$, respectively.
Let $J_{0},J_{1},\ldots,J_{r}$ be a sequence of feasible bases of this system, with corresponding basic feasible solutions $\zero=\yy^{(0)},\yy^{(1)},\ldots,\yy^{(r)}=\yy^{*}$ such that for each $i<r$, the symmetric difference of $J_{i+1}$ and $J_{i}$ is a 2-element subset of $I_{\zero}\cup I_{\yy^{*}}$.
We have $|J_{0}\cap I_{\yy^{*}}|=0$ and $|J_{r}\cap I_{\yy^{*}}|=k$, and $|J_{i+1}\backslash J_{i}|\leq1$ for each $i<r$. 
It follows that there must exist some $\ell$ such that $|J_{\ell}\cap I_{\yy^{*}}|=k-d$.
Since we always have $|J_{i}\cap(I_{\zero}\cup I_{\yy^{*}})|=k$ for every choice of $i$, we also get $|J_{\ell}\cap I_{\zero}|=d$. 

The basic feasible solution $\yy^{(\ell)}$ associated to $J_{\ell}$ is a vertex of the face of $\spindle\left(\yy^{*}\right)$ obtained by making the constraints of $J_{\ell}\cap I_{\yy^{*}}$ tight. 
It is also a vertex of the face of $\spindle\left(\yy^{*}\right)$ obtained by making the constraints of $J_{\ell}\cap I_{\zero}$ tight. 
These faces are contained in a $d$-face and a $\left(k-d\right)$-face, respectively.
\qed
\end{proof}

Lemma~\ref{lem:spindle_down_to_zero} will be used to create a path from one vertex of a spindle to another by traveling over $d$ dimensional faces.
In the next result, we apply proximity results to each $d$ dimensional face that we travel over.
This generalizes the proof of Cook et al., which can be interpreted as walking along edges of a spindle.

\begin{theorem}
\label{thm:proximity_template}Let $I\subseteq\left[m\right]$ index linearly independent rows of $\AA$ such that $\ker\AA_{I}$ is the linear span of $\poly$. 
Let $\dim\poly =: d = \sum_{i=0}^k d_i$ where each $d_{i}$ is a positive integer. 
Then
\[
\max_{\xx\in\poly}\ \objfn^{\t}\xx\leq\Delta_{I} \cdot \textstyle \sum_{i=0}^k \prox_{d_i}.
\]
\end{theorem}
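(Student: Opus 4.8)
The plan is to induct on $k$, using Lemma~\ref{lem:spindle_down_to_zero} to peel off one summand $d_k$ at a time while tracking the value of $\objfn^\t\xx$ along a path of vertices inside a spindle. First I would invoke Lemma~\ref{lem:lifting_lemma} (together with the hypothesis that $\ker\AA_I$ is the linear span of $\poly$) to reduce to the full-dimensional case: replace the instance by $(\AAhat,\hat\bb,\hat\objfn)$ with $\poly(\AAhat,\hat\bb)\subseteq\R^d$ full-dimensional, $\poly(\AAhat,\hat\bb)\cap\Z^d=\{\zero\}$, and $\Delta_I = \Delta(\AAhat,\hat\objfn) = \Delta(\AAhat)$; then the target inequality becomes $\max_{\xx\in\poly}\objfn^\t\xx\le\Delta\cdot\sum_i\prox_{d_i}$ with ambient dimension equal to $d$. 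Let $\xx^*$ attain the maximum of $\objfn^\t\xx$ over $\poly$ (assume the maximum is positive, else nothing to prove). The key object is the spindle $\spindle(\xx^*)\subseteq\poly$, all of whose facets pass through $\zero$ or $\xx^*$.

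Next I would set up the inductive step. Writing $d = d_0 + (d_1 + \cdots + d_k)$, apply Lemma~\ref{lem:spindle_down_to_zero} with $\yy^* := \xx^*$ and the chosen split value — say peeling off $d_0$ — to obtain a $d_0$-face $F$ of $\spindle(\xx^*)$ incident to $\xx^*$ meeting a $(d-d_0)$-face $G$ incident to $\zero$, at a common point $\yy^{(\ell)}$ (the basic feasible solution $\yy^{(\ell)}$ from that lemma's proof). On the face $F$, which is a $d_0$-dimensional polytope defined by making $d-d_0$ of the inequalities of $\spindle(\xx^*)$ tight, I would write $\objfn^\t\xx^* - \objfn^\t\yy^{(\ell)} = \objfn^\t(\xx^* - \yy^{(\ell)})$ and bound this difference by $\prox_{d_0}\cdot\Delta$, arguing as follows: $\xx^* - \yy^{(\ell)}$ lies in a translated copy of a $d_0$-dimensional slice $\poly_{K}$ (for suitable $K$ with $|K| = d - d_0$, the index set of tight constraints defining $F$ after translation), and by definition of $\prox_{d_0}$ and $\Delta_K$, together with the fact that each $\Delta_K \le \Delta$ — this is where I expect to need that dropping rows only shrinks the relevant determinant magnitude, i.e. $\Delta_K(\AAhat,\hat\objfn)\le\Delta(\AAhat,\hat\objfn)=\Delta$ — the gain over this face is at most $\prox_{d_0}\Delta$. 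Then $\yy^{(\ell)}$ lies on the $(d-d_0)$-face $G$ incident to $\zero$; since $\spindle(\yy^{(\ell)})\subseteq\spindle(\xx^*)$ and $G$ is incident to $\zero$, the situation at $\yy^{(\ell)}$ with $G$ playing the role of the full polytope reproduces the hypotheses of the theorem in dimension $d - d_0$ with the shorter composition $(d_1,\ldots,d_k)$, so induction gives $\objfn^\t\yy^{(\ell)}\le\Delta\cdot\sum_{i=1}^k\prox_{d_i}$. Adding the two bounds yields $\objfn^\t\xx^*\le\Delta\cdot\sum_{i=0}^k\prox_{d_i}$.

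The base case $k=0$ is just the definition: $\max_{\xx\in\poly}\objfn^\t\xx = \prox_d\cdot\Delta_\emptyset = \prox_{d_0}\Delta$, using $d = d_0$ and the reduction $\Delta_I = \Delta$.

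The main obstacle I anticipate is the bookkeeping that matches the combinatorial face structure produced by Lemma~\ref{lem:spindle_down_to_zero} with the algebraic quantities $\prox_{d_i}$ and $\Delta_{K}$: I need to verify that the $d_0$-face $F$ incident to $\xx^*$, once translated to have $\zero$ among its vertices, is exactly (a subset of) some $\poly_{K}(\AA,\bb')$ for an appropriate right-hand side and index set, so that the definition \eqref{eqPseudoProx} of $\prox_K$ applies verbatim, and that the normalizing determinant $\Delta_K$ there is bounded by $\Delta$. A secondary subtlety is ensuring the translate still has the ``only integer point is $\zero$'' property (or that this property is not actually needed for the one-face estimate, only for the definitions of $\prox_{d_i}$ which were already established for abstract instances). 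Handling the incidence point $\yy^{(\ell)}$ carefully — it is a vertex of both $F$ and $G$, hence lies in both the translated slice and the $\zero$-incident face — is what makes the additive splitting legitimate, so I would be careful to phrase the induction on the pair $(G, \yy^{(\ell)})$ rather than re-deriving everything from scratch.
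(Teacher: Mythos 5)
Your overall architecture coincides with the paper's: walk from $\xx^{*}$ down to $\zero$ through a nested sequence of spindles, using Lemma~\ref{lem:spindle_down_to_zero} to peel off one $d_{i}$ at a time and telescoping the objective gains. The gap is in the central per-face estimate. You bound $\objfn^{\t}(\xx^{*}-\yy^{(\ell)})$ by viewing the $d_{0}$-face $F$ as a \emph{translated} copy of a slice, and you yourself flag that you cannot guarantee the translate sits inside some $\poly_{K}$ with the right properties. This is not a secondary bookkeeping issue: the quantities $\prox_{d_{i}}$ in the statement are instance-specific, defined via~\eqref{eqPseudoProx} as maxima over slices $\poly_{I'}(\AA,\bb)$ \emph{through the origin of the original polytope with the original integral} $\bb$; they are not universal constants "already established for abstract instances." Translating $F$ by the generally non-integral point $\yy^{(\ell)}$ (or $\xx^{*}$) produces a polytope of the form $\poly(\AA,\bb-\AA\yy^{(\ell)})\cap\ker\AA_{K}$ with non-integral right-hand side, which neither equals a slice of $\poly(\AA,\bb)$ nor satisfies~\eqref{eqOnlyZero}, so it cannot be charged to $\prox_{d_{0}}$; with rational right-hand sides one only recovers the Cook et al.\ bound, so any step that loses integrality of $\bb$ here is fatal to the improvement. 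The missing idea is the central symmetry of the spindle: since $\spindle(\xx_{i}^{*})$ has centre $\tfrac{1}{2}\xx_{i}^{*}$, the \emph{reflected} set $\xx_{i}^{*}-F_{i}$ is again a face of $\spindle(\xx_{i}^{*})$, contains $\zero$ and $\xx_{i}^{*}-\xx_{i+1}^{*}$, lies in $\ker\AA_{I_{i}}$, and is contained in $\poly$, hence in $\poly_{I_{i}}(\AA,\bb)$ itself. This gives $\objfn^{\t}(\xx_{i}^{*}-\xx_{i+1}^{*})\le\max_{\xx\in\poly_{I_{i}}}\objfn^{\t}\xx=\prox_{I_{i}}\Delta_{I_{i}}\le\prox_{d_{i}}\Delta_{I}$ with no translation at all; note that a direct translation $F-\yy^{(\ell)}$ is in general \emph{not} contained in $\poly$, so the reflection (or an equivalent device) is genuinely needed.

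Two smaller points. First, the preliminary reduction to the full-dimensional case via Lemma~\ref{lem:lifting_lemma} is unnecessary and adds obligations: the right-hand side of the theorem involves the $\prox_{d_{i}}$ of the \emph{original} instance, so you would additionally have to show that each $\prox_{d_{i}}$ of the lifted instance is dominated by the original one, including the $\gcd$ normalisation in $\Delta_{K}$ for nested index sets, which the lemma as stated does not provide. The paper instead stays with the original instance, insists $I\subseteq I_{i}$ when choosing the slice, and uses that enlarging the index set can only decrease $\Delta_{I_{i}}$ (fewer admissible $K$, and $\gcd\AA_{I}$ divides $\gcd\AA_{I_{i}}$ by Laplace expansion). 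Second, your induction must also handle the terminal step where $d_{i}\ge\dim\spindle(\xx_{i}^{*})$, i.e.\ the remaining spindle is smaller than the next prescribed block; the paper's case distinction~\eqref{eq:two_conditions} together with the count $t-1\le k$ is exactly what makes the last increment chargeable to $\prox_{d_{t-1}}$, and your sketch does not address it.
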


\begin{remark}\label{remarkAProof}
Recall~\eqref{eqProxWidth}.
Our main result, Theorem~\ref{thm:main_thm_n_2}, is obtained by applying Theorem~\ref{thm:prox_dim_1_2_3}~and Theorem~\ref{thm:proximity_template} for each $\objfn \in \{\pm\ee_{1},\ldots, \pm\ee_{n}\}$ and $\left(d_{0},\ldots,d_{k}\right)=\left(3,\ldots,3,2,\ldots,2\right)$ with as few twos as possible such that $\sum_{i=0}^k d_{i}=d$.
\end{remark}

\begin{proof}[of Theorem~\ref{thm:proximity_template}]
Let $\xx^{*}$ maximize $\aalpha^{\t}\xx$ over $\poly$.
%
Build a sequence $\xx^{*}=:\xx_{0}^{*},\xx_{1}^{*},\ldots,\xx_{t}^{*}:=\zero$ of points inductively as follows. 
Assume $i\geq0$ and $\xx_{0}^{*},\ldots,\xx_{i}^{*}$ have been determined already.
If both 
\begin{equation}
i\leq k~\text{ and }~d_{i}<\dim\spindle\left(\xx_{i}^{*}\right),\label{eq:two_conditions}
\end{equation}
then we use Lemma~\ref{lem:spindle_down_to_zero} to choose a vertex $\xx_{i+1}^{*}$ of $\spindle\left(\xx_{i}^{*}\right)$ that is incident to both a $d_{i}$-dimensional face $F_{i}$ of $\spindle\left(\xx_{i}^{*}\right)$ containing $\xx_{i}^{*}$, as well as a $\left(\dim\spindle\left(\xx_{i}^{*}\right)-d_{i}\right)$-dimensional face $G_{i}$ of $\spindle\left(\xx_{i}^{*}\right)$ containing $\zero$.
Otherwise, if~\eqref{eq:two_conditions} fails, then we set $F_{i}=\spindle\left(\xx_{i}^{*}\right)$ and $\xx_{i+1}^{*}=\zero$, and we terminate the sequence by setting $t=i+1$.

Let $i \in \{0, \ldots, t-2\}$.
%
Note that $G_{i}$ is a proper face of $\spindle\left(\xx_{i}^{*}\right)$ containing both $\zero$ and $\xx_{i+1}^{*}$, and the only face of $\spindle\left(\xx_{t-1}^{*}\right)$ containing both $\zero$ and $\xx_{t-1}^{*}$ is $\spindle\left(\xx_{t-1}^{*}\right)$ itself.
One can see the latter claim by observing that the centre of symmetry of the centrally symmetric spindle $\spindle\left(\xx_{i}^{*}\right)$ is $\sfrac{1}{2} \cdot \xx_{i}^{*}$. 
Thus, $\xx_{i+1}^{*}\neq\zero$.
This shows
\begin{equation}\label{eqBigDim}
\dim\spindle\left(\xx_{i+1}^{*}\right)\geq1.
\end{equation}
Moreover, as both $G_{i}$ and $\spindle\left(\xx_{i+1}^{*}\right)$ are contained in the affine (equivalently, linear) span of $G_{i}$, we must have
\begin{equation}\label{eqStepThrough}
\dim\spindle\left(\xx_{i+1}^{*}\right)\leq\dim G_{i}=\dim\spindle\left(\xx_{i}^{*}\right)-d_{i}.
\end{equation}
Applying~\eqref{eqBigDim} and then~\eqref{eqStepThrough} sequentially with $s \in \{t-2,t-3,\ldots,0\}$, we have
\[
\textstyle
1\leq\dim\spindle\left(\xx_{t-1}^{*}\right) 
  \leq\dim\spindle\left(\xx_{0}^{*}\right)-\sum_{s=0}^{t-2}\ d_{s} 
  \leq d-\sum_{s=0}^{t-2}\ d_{s},
\]
which is to say
\(
d=\sum_{s=0}^{k} d_s > \sum_{s=0}^{t-2}d_s.
\)
It follows that $t-1\leq k$. 

Now let $i \in \{0, \ldots, t-1\}$.
We have that $\xx_{i}^{*}-F_{i}$ is a face of $\spindle\left(\xx_{i}^{*}\right)$ containing $\zero$.
Choose an index set $I_i$, where $I\subseteq I_{i}\subseteq\left[m\right]$, such that the rows of $\AA_{I_i}$ are linearly independent and $\ker\AA_{I_{i}}$ is the linear span of $\xx_{i}^{*}-F_{i}$. 
We have
\[
\objfn^{\t}\left(\xx_{i}^{*}-\xx_{i+1}^{*}\right)\leq\max_{\xx\in\xx_{i}^{*}-F_{i}}\objfn^{\t}\xx\leq\max_{\xx\in\poly_{I_{i}}}\objfn^{\t}\xx\leq\prox_{I_{i}}\Delta_{I_{i}}.
\]
If $i<t-1$, then since $F_i$ is a $d_i$-dimensional face, we have
\(
\prox_{I_{i}}\Delta_{I_{i}}\leq \prox_{d_{i}}\Delta_{I}
\)
for $i\in \{0,\ldots,t-2\}$.
Otherwise $i=t-1$, in which case one of the inequalities in~\eqref{eq:two_conditions} fails. We have established that $t-1\leq k$, thus
\begin{equation}
d_{t-1}\geq\dim\spindle\left(\xx_{t-1}^{*}\right)=\dim F_{t-1}.\label{eq:dim_last_face}
\end{equation}
and hence
\(
\prox_{I_{t-1}}\Delta_{I_{t-1}}\leq \prox_{d_{t-1}}\Delta_{I}
\).
Putting this all together we get
\[
\max_{\xx\in\poly}\ \objfn^{\t}\xx\ = \objfn^{\t}\xx^* =
\sum_{i=0}^{t-1}\ \objfn^{\t}\left(\xx_{i}^{*}-\xx_{i+1}^{*}\right)\ \leq\ \sum_{i=0}^{t-1}\ \prox_{I_{i}}\Delta_{I_{i}}\
\leq\ \Delta_I \cdot \sum_{i=0}^k\prox_{d_i} \qedhere
\]
\end{proof}

\section{Proximity in the strictly $\Delta$-modular case.}\label{secStrictDelta}

%
In this section we assume $\AA = \TT\BB$, where $\TT$ is totally unimodular and $\BB$ is an invertible square matrix with $|\det \BB| = \Delta = \Delta_n(\AA)$.
%

%

%
%
%
%
%
The following lemma is similar to \cite[Lemma 29 and Lemma 30]{NSZ2021} after linear transformation with $\BB$.
\begin{lemma}
\label{lem:strictlydeltamodrays} 
Set $\Lambda:=\BB^{-1}\Z^{n}$ and let $\xx^{*}\in\Lambda$.
Then each ray of $\ccone\left(\xx^{*}\right)$ contains a primitive vector in $\Lambda$, and $\xx^{*}$ can be written as a non-negative integral combination of those vectors. 
\end{lemma}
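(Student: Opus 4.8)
The plan is to work in the lattice $\Lambda = \BB^{-1}\Z^n$, where the key point is that $\AA = \TT\BB$ acts on $\Lambda$ as the totally unimodular matrix $\TT$ acts on $\Z^n$. Concretely, the substitution $\xx = \BB^{-1}\yy$ identifies $\Lambda$ with $\Z^n$, identifies $\ccone(\AA,\xx^*)$ with $\ccone(\TT,\yy^*)$ where $\yy^* = \BB\xx^* \in \Z^n$, and identifies $\ker\AA_I$ with $\ker\TT_I$ for every $I$. So it suffices to prove the statement for a totally unimodular matrix $\TT$ and an integral point $\yy^*$: every ray of $\ccone(\TT,\yy^*)$ contains a primitive integral vector, and $\yy^*$ is a non-negative integral combination of the primitive integral generators of the extreme rays of $\ccone(\TT,\yy^*)$.

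The first step is to reduce to the pointed case. Let $L = \ker\TT_I$ where $I$ indexes the rows with $\aa_i^\t \yy^* = 0$; this is the lineality space of $\ccone(\TT,\yy^*)$, and since $\TT$ is totally unimodular, $L \cap \Z^n$ is a lattice that spans $L$ and is a direct summand of $\Z^n$ (the rows of $\TT_I$ extend to a basis of $\Z^n$ by unimodularity, so $L\cap\Z^n$ is saturated). One checks $\yy^* \in L$ is impossible unless $\yy^*$ lies on the boundary structure appropriately; more precisely, $\yy^*$ is a vertex of the spindle and lies in the relative interior of the pointed part. Quotienting by $L$, we may assume $\ccone(\TT,\yy^*)$ is a pointed rational cone: its extreme rays are spanned by vectors $\rr_1,\dots,\rr_p$, each of which may be taken primitive integral (a rational ray always contains a unique primitive integral vector). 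The substantive claim is then that $\yy^*$ is a \emph{non-negative integral} combination of $\rr_1,\dots,\rr_p$, not merely a non-negative rational one.

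The integrality of the combination is where total unimodularity does the real work, and this is the main obstacle. The point $\yy^*$ satisfies $\hat\aa_i^\t\yy^* = \hat\aa_i^\t\yy^*$ with $0 \le \hat\aa_i^\t\xx \le \hat\aa_i^\t\yy^*$ describing $\spindle(\TT,\yy^*)$ (using the sign-adjusted rows $\hat\aa_i$ as in Lemma~\ref{lem:spindle_down_to_zero}); equivalently $\yy^*$ is a vertex of a polyhedron defined by $\TT$-type constraints, hence integral, but I need the \emph{generators} to combine integrally. The clean way is this: $\ccone(\TT,\yy^*)$ is a face cone of the polyhedron $\{\xx : \hat\aa_i^\t\xx \ge 0\}$, which is defined by an integral totally unimodular system, so it is a unimodular (in fact, its generators $\rr_j$ together with a basis of the lineality lattice form a lattice basis of $\Z^n$ restricted to the relevant coordinates — total unimodularity guarantees the extreme rays of a cone $\{\TT'\xx \ge \zero\}$, $\TT'$ TU, are spanned by $\pm$ columns of a matrix whose maximal minors are $0,\pm1$, so any integral point in the cone is a non-negative integral combination of the primitive ray generators). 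Invoking this structural fact — which is standard and can be cited, e.g.\ from the theory of TU matrices / the fact that cones defined by TU inequality systems are \emph{box-totally dual integral} and in particular their integral points are $\N$-generated by primitive rays — finishes the argument. I would isolate exactly this statement as the crux and either cite it or give the short inductive proof peeling off one extreme ray at a time, at each step using that the facet defined by dropping one inequality is again a TU system so the "remaining" coefficient stays integral.

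Finally, transporting back via $\xx = \BB^{-1}\yy$: the primitive vectors in $\Z^n$ on the rays of $\ccone(\TT,\yy^*)$ pull back to vectors of $\Lambda$ on the rays of $\ccone(\AA,\xx^*)$ (primitive with respect to $\Lambda$, since $\BB^{-1}$ is a $\Z$-module isomorphism $\Z^n \to \Lambda$), and the non-negative integral combination expressing $\yy^* = \BB\xx^*$ pulls back to the same non-negative integral combination expressing $\xx^*$. This yields both assertions of the lemma.
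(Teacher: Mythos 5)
Your reduction via $\xx=\BB^{-1}\yy$ to a totally unimodular system, and the observation that each (rational) ray of the cone then contains a primitive lattice vector, are fine and consistent with how the paper views the lemma; the detour through a lineality space is unnecessary (the cone is automatically pointed because $\AA$ has full column rank), and the sentence about when $\yy^{*}$ can lie in $L$ is garbled, but these are cosmetic. The genuine gap is in the second assertion, which is the entire substance of the lemma. You reduce it to the claim that for a pointed cone $\left\{ \xx:\TT'\xx\geq\zero\right\} $ with $\TT'$ totally unimodular, every integral point is a non-negative \emph{integral} combination of the primitive integral generators of the extreme rays, and you then declare this claim standard and citable. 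It is not a textbook fact one can simply point to, and the justifications you sketch do not establish it: box-total dual integrality concerns dual integrality of linear programs over the system, not the monoid structure of the integral points of the cone; and the parenthetical argument (the ray generators form a matrix with maximal minors $0,\pm1$, ``so'' integral points decompose integrally) is a non sequitur, since the cone is in general not simplicial, the unimodularity of the configuration of extreme rays is itself unproven, and even granted it one still needs a triangulation-type argument. The one-line induction ``peel off one extreme ray at a time, the facet obtained by dropping one inequality is again TU'' never specifies how much of a ray may be subtracted while staying in the cone, why that amount is a positive integer, or why the process terminates.

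That missing argument is precisely where the paper does its work, and it is where the $\Delta$-modular (after transformation, TU) structure genuinely enters. The paper inducts on $\dim\spindle\left(\xx^{*}\right)$: it picks a vertex $\vv\neq\xx^{*}$ of $\spindle\left(\xx^{*}\right)$ adjacent to $\zero$; such a vertex lies on a ray of $\ccone\left(\xx^{*}\right)$ and lies in $\Lambda$ because the spindle is cut out by a strictly $\Delta$-modular system with right-hand sides that are integral in $\BB$-coordinates, so $\vv$ is a positive integer multiple of the primitive generator of that ray; by the central symmetry of the spindle, $\xx^{*}-\vv$ is again a vertex in $\Lambda$, and there is a constraint tight at $\xx^{*}-\vv$ and $\zero$ but not at $\xx^{*}$, which forces $\dim\spindle\left(\xx^{*}-\vv\right)<\dim\spindle\left(\xx^{*}\right)$ and guarantees termination at the origin. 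If you keep your TU reformulation you still have to supply an induction of this kind (or a correct proof of your ``standard'' claim); as written, the crux of the lemma is asserted rather than proved.
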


\begin{proof}
Let $I\subseteq\left[m\right]$ index a one-dimensional subspace $\ker\AA_{I}$
which contains a ray of $\ccone\left(\xx^{*}\right)$. 
Further, let $j\in\left[m\right]\backslash I $ index another row of $\AA$ such that $\AA_{I\cup j}$ is invertible with last row $\AA_{j}$.
We choose the following scaled vector 
\begin{align}\label{eq:latticepointray}
\rr:=\AA_{I\cup j}^{-1}\ee_{n}=\BB^{-1}\TT_{I\cup j}^{-1}\ee_{n}\in\ccone\left(\xx^{*}\right)\cap\ker\AA_{I}.
\end{align}
We have $\TT_{I\cup \{j\}}^{-1}\ee_{n}\in\Z^{n}$, so $\rr\in\Lambda$, because $\TT$ is totally unimodular.
The first claim follows, as the existence of a nonzero lattice vector on a ray implies the existence of a primitive lattice vector.

For the latter statement we study $\spindle\left(\xx^{*}\right)$.
If $\spindle\left(\xx^{*}\right)$ is zero-dimensional, then $\xx^{*}=\zero$ and we are done. 
If $\spindle\left(\xx^{*}\right)$ is one-dimensional, then $\xx^{*}$ is by construction an integer multiple of some primitive lattice ray. 
Hence, we assume that $\spindle\left(\xx^{*}\right)$ is at least two-dimensional. 
Choose a vertex $\vv$ adjacent to $\zero$ which is not $\xx^{*}$. 
As the constraint matrix defining $\spindle\left(\xx^{*}\right)$ is strictly $\Delta$-modular, every vertex of $\spindle\left(\xx^{*}\right)$ is in $\Lambda$, in particular $\vv\in\Lambda$. 
Thus, $\vv$ is an integer multiple of some primitive vector in $\Lambda$. 
Furthermore, the symmetry of $\spindle\left(\xx^{*}\right)$ implies 
\(
\xx^{*}-\vv\in\Lambda
\)
and is a vertex of $\spindle\left(\xx^*\right)$ adjancent to $\xx^*$. It follows there exists a constraint of $\spindle\left(\xx^*\right)$ tight at $\xx^*-\vv$ and $\zero$ but not $\xx^*$, and this implies that the dimension of $\spindle\left(\xx^*-\vv\right)$ is strictly smaller than $\spindle\left(\xx^*\right)$.  
We may therefore repeat the procedure with $\spindle\left(\xx^{*}-\vv\right)$ and so on, and termination is guaranteed when we reach the origin.
\qed
\end{proof}

\begin{proof}[of Theorem~\ref{thm:TU} Part 1]
%
%
Recall $\poly = \poly(\AA,\bb)$. Set $\Lambda:=\BB^{-1}\Z^{n}$, and choose a vertex $\xx^*$ of $\poly$ such that $\norm{\xx^*}_{\infty}$ is as large as possible.
Every vertex of $\poly$ is in $\Lambda$, so $\xx^{*}\in\Lambda$.
Lemma \ref{lem:strictlydeltamodrays} yields 
\begin{align}
\xx^{*}=\sum_{s=1}^t\lambda_{s}\rr_{s}\label{eq:positive_combination}
\end{align}
where $\rr_{1},\ldots,\rr_{t}\in\ccone\left(\xx^{*}\right)$ denote the
primitive vectors in $\Lambda$ and $\lambda_{1},\ldots,\lambda_{t}\in\Z_{\geq0}$.

Observe that each subsum of the right side in~\eqref{eq:positive_combination} is an element in $\spindle\left(\xx^{*}\right)$. 
Further, recall that $\spindle\left(\xx^{*}\right)\subseteq\poly$.
Let $N:=\lambda_{1}+\cdots+\lambda_{t}$. 
We choose a sequence $\zero=\xx^{(0)},\xx^{(1)},\ldots,\xx^{(N)}=\xx^{*}$ such that 
\begin{align*}
\xx^{(i)}-\xx^{(i-1)}\in\left\{ \rr_{1},\ldots,\rr_{t}\right\} 
\end{align*}
for all $i\in [N]$. 
Thus, $\xx^{(i)}\in\Lambda$ for $i\in\left[N\right]$ and all these vectors are pairwise distinct elements in $\spindle\left(\xx^{*}\right)$.
If $N\geq\Delta$, then by $\spindle\left(\xx^{*}\right)\cap\Z^{n}=\left\{ \zero\right\} $ and the pigeonhole principle there are $\xx^{(i)}$ and $\xx^{(j)}$ for $i<j$ that lie in the same residue class of $\Lambda$ modulo $\Z^{n}$. 
Hence, we have the contradiction
\begin{align*}
\zero\neq\xx^{(j)}-\xx^{(i)}\in\spindle\left(\xx^{*}\right)\cap\Z^{n}\subseteq\poly\cap\Z^{n}.
\end{align*}
We proceed with $N\leq\Delta-1$. 
We have 
\(
\norm{\rr_{s}}_{\infty}\leq \frac{\Delta_{n-1}(\AA)}{\Delta}
\)
for all $s\in\left[t\right]$ by Cramer's rule applied to (\ref{eq:latticepointray}). 
Altogether, this yields 
\begin{align*}
\norm{\xx^{*}}_{\infty}\leq\sum_{s=1}^{t}\lambda_{s}\norm{\rr_{s}}_{\infty}\leq \frac{\Delta - 1}{\Delta} \Delta_{n-1}(\AA) \leq \max\left\{ \Delta_{n-1}(\AA),\Delta_{n}(\AA)\right\}-1.
\end{align*}
\qed
\end{proof}

\section{A lower bound example.}\label{secLowerBound}

%
%
The following construction proves Theorem~\ref{thm:TU}, Part 2. Let $\Delta\geq 3$. 
Fix the matrix
\begin{align*}
\BB :=\left(\begin{array}{cc}
{\ve I}_{n-1} & \zero\\
{\ve{\beta}}^{k} & \Delta
\end{array}\right)\in\Z^{n\times n},
\end{align*}
where ${\ve I}_{n-1}$ denotes the $(n-1)\times(n-1)$ unit matrix and, for $0\leq k \leq n-1$,
\begin{align*}
{\ve{\beta}}^{k}:=(\underbrace{0,\ldots,0}_{k\;\text{zeros}},\Delta-1,\ldots,\Delta-1).
\end{align*}
%
%
%
%
%
As a first step, we define the parallelepiped 
\begin{align*}
	\poly(\BB) :=\left\{ \xx\in\R^{n}:\zero\leq\BB\xx\leq\left(\begin{array}{c}
		\one_{k}\\
		\Delta-n+k\\
		\one_{n-k-1}
	\end{array}\right)\right\}
\end{align*}
for $\Delta-n+k\geq1$. 
Using the fact that the first $k$ columns of $\BB^{-1}$ are integral, one can show $\bigl| \poly(\BB)\cap \Z^n \bigr| = 2^k$. 
In order to cut off all non-zero integer points in $\poly(\BB)$, we define for $k\geq 1$ the row vectors

\begin{align*}
\aa_{i}^{\t}:=(0,\ldots,\hspace{-.3 cm}\underbrace{1}_{i\text{-th column}}\hspace{-.3 cm},\ldots,0,\hspace{-.5 cm}\underbrace{-\Delta}_{(k+1)\text{-st column}}\hspace{-.5 cm},\ldots,-\Delta)
\end{align*}
for each $i\in [k]$. 
The resulting polytope is
\begin{align*}
\poly_{\Delta,n,k} :=\poly(\BB)\cap\left\{ \xx\in\R^{n}:\aa_{i}^{\t}\xx\leq0\text{ for all }i\in [k]\right\}.
\end{align*}
If $k=0$, then $\poly_{\Delta,n,0}=\poly(\BB)$. 
Let $\xx^{*}$ denote the only vertex in $\poly(\BB)$ that does not share a facet with $\zero$.
Further, let $\AA$ and $\bb$ be such that $\poly(\AA,\bb)=\poly_{\Delta,n,k}$.
Then one can show that

\begin{enumerate}
\item $\xx^{*}\in\poly_{\Delta,n,k}$ and $\xx^*$ does not share a facet with ${\ve 0}$,
\item $\poly_{\Delta,n,k}\cap\Z^{n}=\{\zero\}$, 
\item $\AA$ is strictly $\Delta$-modular.
\end{enumerate}
We select $\poly_{\Delta,n,n-2}$ and get
\begin{align*}
	\norm{\xx^* - {\ve 0}}_{\infty} = \left| \xx^*_{n - 1}\right| = \Delta - 2.
\end{align*}
Observe that $\Delta_{n-1}(\AA) = \Delta$ for $\poly_{\Delta,n,n-2}$ which proves Part 2 of Theorem~\ref{thm:TU}.

\begin{remark}
The polytope $\poly_{\Delta,n,n-1}$ does not work as an example since in this instance, the greatest common divisor of the $n$-th row of $\BB$ is $\Delta$. As a result, we only obtain the weak proximity bound $\norm{\xx^*}_{\infty}=1$.
However, a question related to the proximity question is to bound $\norm{\bb}_{\infty}$ given that $\poly\cap\Z^n=\{\zero\}$ and all constraints of $\poly$ are tight. The polytope $\poly_{\Delta,n,n-1}$ yields an example with $\norm{\bb}_{\infty} = \Delta - 1$.
\end{remark}
\newpage

\bibliographystyle{plain}
\bibliography{manuscript}

\end{document}